\newcommand{\abs}[1]{\left|#1\right|}
\newcommand{\set}[1]{\left\{#1\right\}}
\newcommand{\NN}{\mathbb{N}}
\tikzset{->-/.style={decoration={
  markings,
  mark=at position .5 with {\arrow[scale=0.8]{>}}},postaction={decorate}}}
\tikzset{snake it/.style={decorate, decoration={snake, amplitude=.4mm, segment length=2mm}}}
\newcommand{\qedclaim}{\hfill $\diamond$ \medskip}
\newenvironment{proofclaim}{\noindent{\em Proof of the claim.}}{\qedclaim}
\newtheorem{theorem}{Theorem}[section]
\newtheorem{observation}[theorem]{Observation}
\newtheorem{claim}[theorem]{Claim}
\newtheorem{corollary}[theorem]{Corollary}
\newtheorem{proposition}[theorem]{Proposition}
\newcommand{\chis}{\chi_\Sigma}
\newcommand{\chip}{\chi_\Pi}
\newcommand{\chs}{{\rm ch}_\Sigma}
\newcommand{\chp}{{\rm ch}_\Pi}
\newcommand{\chps}{{\rm ch}_\Pi^*}
\newcommand{\ch}{{\rm ch}}
\newtheorem*{123c}{1-2-3 Conjecture}
\newtheorem*{m123c}{Multiplicative 1-2-3 Conjecture}
\newtheorem*{l123c}{List 1-2-3 Conjecture}
\newtheorem*{lm123c}{List Multiplicative 1-2-3 Conjecture}
\newtheorem*{cn}{Combinatorial Nullstellensatz}
\begin{document}

\title{On a List Variant of the Multiplicative 1-2-3 Conjecture}

\author[1]{Julien Bensmail}
\author[2]{Hervé Hocquard}
\author[2]{Dimitri Lajou}
\author[2]{\'Eric Sopena}

\affil[1]{{\small Universit\'e C\^ote d'Azur, Inria, CNRS, I3S, France}}
\affil[2]{{\small Univ. Bordeaux, CNRS,  Bordeaux INP, LaBRI, UMR 5800, F-33400, Talence, France}}

\maketitle

\begin{abstract}
The 1-2-3 Conjecture asks whether almost all graphs can be (edge-)labelled with $1,2,3$ so that no two adjacent vertices are incident to the same sum of labels.
In the last decades, several aspects of this problem have been studied in literature, including more general versions and slight variations.
Notable such variations include the List 1-2-3 Conjecture variant, in which edges must be assigned labels from dedicated lists of three labels,
and the Multiplicative 1-2-3 Conjecture variant, in which labels~$1,2,3$ must be assigned to the edges so that adjacent vertices are incident to different products of labels.
Several results obtained towards these two variants led to observe some behaviours that are distant from those of the original conjecture.   

In this work, we consider the list version of the Multiplicative 1-2-3 Conjecture, proposing the first study dedicated to this very problem.
In particular, given any graph $G$, we wonder about the minimum~$k$ such that $G$ can be labelled as desired when its edges must be assigned labels from dedicated lists of size~$k$.
Exploiting a relationship between our problem and the List 1-2-3 Conjecture, we provide upper bounds on~$k$ when $G$ belongs to particular classes of graphs.
We further improve some of these bounds through dedicated arguments. 

\end{abstract}

{\keywords{
proper labelling; 1-2-3 Conjecture; product; list.}}

\section{Introduction}

Let $G$ be a graph and $\ell$ be a \textit{$k$-labelling} of $G$, i.e., an assignment $\ell : E(G) \rightarrow \{1,\dots,k\}$ of labels $1,\dots,k$ to the edges of $G$.
For every vertex $v$ of $G$, one can compute, as a colour, the \textit{sum} $\sigma_\ell(v)$ of labels assigned by $\ell$ to the edges incident to $v$, that is $$\sigma_\ell(v)= \sum_{w \in N(v)} \ell(vw).$$
We say that $\ell$ is \textit{s-proper} if $\sigma_\ell$ is a proper vertex-colouring of $G$, i.e., if, for every edge $uv$ of $G$, we have $\sigma_\ell(u) \neq \sigma_\ell(v)$.
We denote by $\chis(G)$ the smallest $k \geq 1$, if any, such that $G$ admits s-proper $k$-labellings.
It turns out that $\chis(G)$ is defined, i.e., that $G$ admits s-proper labellings, if and only if $G$ has no connected component isomorphic to $K_2$.
For this reason, when investigating s-proper labellings, we generally focus on so-called \textit{nice graphs}, which are those graphs with no connected component isomorphic to $K_2$,
i.e., having their parameter $\chis$ being properly defined.

The \textbf{1-2-3 Conjecture}, introduced in~\cite{KLT04} by Karo\'nski, {\L}uczak and Thomason in 2004, presumes that the maximum value of $\chis(G)$ for a nice graph $G$
should never exceed~$3$; that is:

\begin{123c}
If $G$ is a nice graph, then $\chis(G) \leq 3$.
\end{123c}

Several aspects towards this conjecture have been investigated to date.
For an in-depth review of most of our knowledge on the problem,
we refer the reader to the survey~\cite{Sea12} by Seamone.
Let us mention, as notable evidence towards the 1-2-3 Conjecture, that it is known to hold for $3$-colourable graphs~\cite{KLT04}, and that $\chis(G) \leq 5$ holds for every nice graph $G$~\cite{KKP10}.

\medskip

Our investigations in this work are primarily related to two variants of the 1-2-3 Conjecture,
being its \textbf{Multiplicative} and \textbf{List variants},
which we recall in what follows.

As the name suggests, the Multiplicative variant is related to products of labels rather than to sums of labels.
The terminology is as follows. Let $G$ be a graph and $\ell$ be a labelling of $G$.
This time, for every vertex $v$ of $G$, we compute, as a colour, the \textit{product} $\pi_\ell(v)$ of labels incident to $v$ by $\ell$,
that is $$\pi_\ell(v)= \prod_{w \in N(v)} \ell(vw).$$
We say that $\ell$ is \textit{p-proper} if $\pi_\ell$ is a proper vertex-colouring of $G$, i.e., if, for every edge $uv$ of $G$, we have $\pi_\ell(u) \neq \pi_\ell(v)$.
Assuming $G$ is nice, we denote by $\chip(G)$ the smallest $k$ such that $G$ admits p-proper $k$-labellings.

Introduced in~\cite{SK12} by Skowronek-Kazi\'ow in 2012, the \textbf{Multiplicative 1-2-3 Conjecture} stands as the straight product counterpart to the original 1-2-3 Conjecture:

\begin{m123c}
If $G$ is a nice graph, then $\chip(G) \leq 3$.
\end{m123c}

The Multiplicative 1-2-3 Conjecture has received, to date, much less attention than the original 1-2-3 Conjecture.
Towards it, the main results we know of are that the conjecture holds for $4$-colourable graphs~\cite{BHLS20},
and that $\chip(G) \leq 4$ holds for every nice graph $G$~\cite{SK12}.

The List variant of the 1-2-3 Conjecture is a generalisation where edges must be assigned labels from fixed-size lists that might be different from $\{1,2,3\}$.
This is all defined accordingly to the following definitions.
Let $G$ be a graph and $L$ be a \textit{$k$-list assignment} (to the edges) of $G$, i.e., an assignment of sets of $k$ real numbers to the edges. 
An \textit{$L$-labelling} $\ell$ of $G$ is a labelling where each edge is assigned a label from its list, i.e., $\ell(e) \in L(E)$ for every $e \in E(G)$.
Note that the notion of s-proper labellings can be extended naturally to $L$-labellings.
Now, for a nice graph $G$, we define $\chs(G)$ as the smallest $k$ such that $G$ admits an s-proper $L$-labelling for every $k$-list assignment $L$.

The \textbf{List 1-2-3 Conjecture}, introduced in 2009 by Bartnicki, Grytczuk and Niwczyk in~\cite{BGN09}, 
is the straight analogue of the 1-2-3 Conjecture to the previous notions:

\begin{l123c}
If $G$ is a nice graph, then $\chs(G) \leq 3$.
\end{l123c}

The List 1-2-3 Conjecture is of course much stronger than the original conjecture, and, as a matter of fact,
there is still no known general constant upper bound on $\chs$. To date, the best bound we know of, is that $\chs(G) \leq \Delta(G)+1$ holds for every nice graph $G$~\cite{DDWWWYZ19} (where $\Delta(G)$ denotes the maximum degree of $G$).
Constant upper bounds were established for some classes of graphs; see later Section~\ref{section:cn-and-connections} for more details.
For now, let us just mention that most of these results were established through an approach imagined by Bartnicki, Grytczuk and Niwczyk in~\cite{BGN09},
which is reminiscent of the studies on choosability of graphs, which relies on attacking the problem from an algebraic point of view.
This will also be described further in a later section (Subsection~\ref{subsection:algebraic-tools}), as this is an important point behind our investigations.
	
\medskip

This work is dedicated to investigating a new problem inspired from the previous ones above, 
holding, essentially, as a \textbf{List Multiplicative 1-2-3 Conjecture}.
Note that, for an $L$-labelling of a nice graph $G$, the notion of p-properness adapts naturally,
and from this we can define $\chp(G)$ as the smallest $k$ such that $G$ admits a p-proper $L$-labelling for every $k$-list assignment $L$ to its edges.
This parameter $\chp$ is precisely the one we study throughout this work. 
To the best of our knowledge, this parameter was, to date, only discussed briefly by Seamone in his survey~\cite{Sea12}, in which he suggests a few of its properties. This parameter is also somewhat close to other studied parameters, such as the notions of \textit{product irregularity strength}~\cite{Anh09} (related to labellings for which all vertices, not only the adjacent ones, must be incident to distinct products of labels) and \textit{neighbour-product-distinguishing index}~\cite{LQWY17} (related to labellings for which the labels assigned to the edges must form a proper edge-colouring).

The current paper consists of two main sections.
Section~\ref{section:cn-and-connections} stands as a preliminary section in which we raise first observations on the parameter $\chp$.
In that section, we also explore the connections between our problem and the List 1-2-3 Conjecture,
from which we get first systematic upper bounds on $\chp$.
We also get to describing the algebraic approach through which we improve some of these first bounds.
These improved bounds are gathered in Section~\ref{section:improved-bounds}, and are about both general graphs (Subsection~\ref{subsection:general-bounds}) and particular classes of graphs, such as trees, planar graphs with large girth, and subcubic graphs (Subsection~\ref{subsection:bounds-particular-classes}).


\section{Preliminaries, tools, and connections with the sum variant}\label{section:cn-and-connections}

We here introduce all tools and preliminary materials needed to establish the results in later Section~\ref{section:improved-bounds}. In Subsection~\ref{subsection:remarks-chp}, we first state a few easy observations on the parameter $\chp$. In Subsection~\ref{subsection:connections-sum-variant}, we establish and exploit a relationship between the two parameters $\chs$ and $\chp$, from which we deduce first constant bounds on $\chp$ for several graph classes. Finally, in Subsection~\ref{subsection:algebraic-tools}, we recall algebraic tools from which improved bounds will be obtained, later in Section~\ref{section:improved-bounds}.

For transparency, let us mention that some of the results from this section, mostly from Subsection~\ref{subsection:remarks-chp}, were already suggested by Seamone in~\cite{Sea12}, in which the parameter $\chp$ is discussed very briefly. To make our contribution clear, we notify properly, through what follows, every remark also mentioned in~\cite{Sea12}.


\subsection{Early remarks on the parameter $\chp$}\label{subsection:remarks-chp} 

As remarked in~\cite{Sea12}, note, given an edge $uv$ of a graph $G$, that if $\ell(uv)=0$ by a labelling $\ell$ of $G$, then $\ell$ cannot be p-proper, since this would imply $\pi_\ell(u)=\pi_\ell(v)=0$. Thus, for any list assignment $L$ of $G$, a p-proper $L$-labelling is actually a p-proper $L^*$-labelling, where $L^*$ is the list assignment of $G$ verifying $L^*(e)=L(e) \setminus \{0\}$ for every edge $e \in E(G)$. 
Therefore, throughout this work, we consider list assignments not assigning label~$0$ to the edge lists. To catch this point, we refine the parameter $\chp(G)$ of a graph $G$ to the parameter $\chps(G)$, which is the smallest $k \geq 1$, if any, such that $G$ admits p-proper $L$-labellings for every $k$-list assignment $L$ not assigning label~$0$. 

By the previous remarks, obviously the following holds:

\begin{observation}\label{observation:nozero}
If $G$ is a nice graph, then $\chp(G)=\chps(G)+1$.
\end{observation}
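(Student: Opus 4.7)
The proof proposal is to establish the two inequalities $\chp(G)\leq\chps(G)+1$ and $\chp(G)\geq\chps(G)+1$ separately, leveraging the key observation (already noted just above in the excerpt) that a p-proper labelling cannot use label~$0$ on any edge: if $\ell(uv)=0$, then $\pi_\ell(u)=\pi_\ell(v)=0$, which violates p-properness on the edge $uv$.

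For the upper bound $\chp(G)\leq\chps(G)+1$, I would let $L$ be an arbitrary $(\chps(G)+1)$-list assignment to the edges of $G$ (possibly containing~$0$), and form $L^*$ by removing~$0$ from each list, so that $|L^*(e)|\geq\chps(G)$ for every edge~$e$ and $L^*$ contains no~$0$. Restricting each $L^*(e)$ to any $\chps(G)$-subset and invoking the definition of $\chps(G)$, one obtains a p-proper $L^*$-labelling, which is also a p-proper $L$-labelling. This gives the desired bound.

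For the lower bound $\chp(G)\geq\chps(G)+1$, I would treat two cases. If $\chps(G)\geq 2$, then by minimality in the definition of $\chps$, there exists a $(\chps(G)-1)$-list assignment $L^*$ not containing $0$ that admits no p-proper $L^*$-labelling. Build $L$ by setting $L(e)=L^*(e)\cup\{0\}$, so that $|L(e)|=\chps(G)$ for every edge~$e$. Any p-proper $L$-labelling $\ell$ must satisfy $\ell(e)\neq 0$ for every edge $e$ (by the key observation above), hence would be a p-proper $L^*$-labelling, contradicting the choice of $L^*$. If instead $\chps(G)=1$, one may simply take $L(e)=\{0\}$ for every edge, which admits no p-proper labelling, so $\chp(G)\geq 2 = \chps(G)+1$.

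There is no real obstacle here; the statement is essentially a bookkeeping fact about how the presence of the forbidden label~$0$ inflates the list-size parameter by exactly one. The only tiny technicality is to handle separately the degenerate case $\chps(G)=1$ in the lower bound (since then there is no $(\chps(G)-1)$-list assignment to invoke), which is done via the trivial all-$\{0\}$ assignment.
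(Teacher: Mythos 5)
Your proposal is correct and follows exactly the idea the paper relies on: since a p-proper labelling can never use label~$0$, removing~$0$ from (or adding~$0$ to) each list shifts the required list size by exactly one, and the paper simply declares the observation obvious from this remark without writing out the two inequalities. Your more careful treatment, including the monotonicity of list assignments and the degenerate case $\chps(G)=1$, is just a fully detailed version of the same argument.
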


We note that if $L$ is the $1$-list assignment of $G$ where $L(e) = \{1\}$ for every edge $e$, then $G$ admits no p-proper $L$-labellings, since every such labelling $\ell$ would verify $\pi_\ell(u)=\pi_\ell(v)=1$ for every edge $uv \in E(G)$. Thus:

\begin{observation}
There is no graph $G$ verifying $\chps(G)=1$.
\end{observation}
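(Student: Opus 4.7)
The approach is to exhibit, for an arbitrary nice graph $G$ with at least one edge, a concrete $1$-list assignment $L$ such that $G$ admits no p-proper $L$-labelling; such an $L$ certifies $\chps(G) \geq 2$, and therefore rules out $\chps(G)=1$. Following the hint given in the paragraph immediately preceding the statement, my candidate is the trivial assignment $L(e) = \{1\}$ for every $e \in E(G)$, which is a legitimate $1$-list assignment since it does not use the label~$0$.

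Concretely, I would carry out three short steps. First, observe that this particular $L$ forces any $L$-labelling $\ell$ of $G$ to be the constant labelling $\ell \equiv 1$, since each edge has a unique label available. Second, compute the product at each vertex: for every vertex $v$ with at least one incident edge, $\pi_\ell(v) = \prod_{w \in N(v)} 1 = 1$. Third, pick any edge $uv \in E(G)$, which exists by hypothesis, and note $\pi_\ell(u) = \pi_\ell(v) = 1$, so $\ell$ fails to be p-proper. Consequently, $G$ admits no p-proper $L$-labelling, and by definition of $\chps$ we get $\chps(G) \neq 1$.

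There is essentially no genuine obstacle: the claim is a one-line consequence of the fact that an all-ones labelling cannot distinguish any pair of adjacent vertices via products. The only subtlety worth flagging is the degenerate case where $E(G) = \emptyset$; this regime is trivial (the empty labelling is vacuously p-proper for every list assignment) and lies outside the scope of the investigation, which implicitly considers graphs carrying at least one edge so that the parameter $\chps$ is informative.
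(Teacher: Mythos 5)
Your proposal is correct and coincides with the paper's own argument: the paper likewise takes the $1$-list assignment $L(e)=\{1\}$ for every edge and notes that the resulting all-ones labelling gives $\pi_\ell(u)=\pi_\ell(v)=1$ for every edge $uv$, so no p-proper $L$-labelling exists. Nothing further is needed.
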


Analogous conclusions can be reached regarding graphs $G$ with $\chps(G)=2$. Here, consider the $2$-list assignment $L$ of $G$ where $L(e)=\{-1,1\}$ for every edge $e$. Then, by an $L$-labelling $\ell$ of $G$, we have $\pi_\ell(v) \in \{-1,1\}$ for every vertex $v \in V(G)$. This implies that $\ell$ is p-proper if and only if $\pi_\ell$ is a proper $2$-vertex-colouring of $G$. In turn, this yields the following (also mentioned in~\cite{Sea12}):

\begin{observation}
If $G$ is a graph with $\chps(G)=2$, then $G$ is bipartite.
\end{observation}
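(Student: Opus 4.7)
The plan is to prove the contrapositive: if $G$ is not bipartite, then $\chps(G) \neq 2$. Since the preceding discussion already identifies the list assignment to consider, namely $L(e) = \{-1, 1\}$ for every $e \in E(G)$, the proof reduces to making the three observations implicit in that discussion fully precise.

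First, I would verify that $L$ is indeed an admissible list assignment in the sense used to define $\chps$: each list has size $2$ and contains no occurrence of the label $0$. Next, for an arbitrary $L$-labelling $\ell$ of $G$, every label $\ell(e)$ lies in $\{-1, 1\}$, so for each vertex $v$ the product $\pi_\ell(v) = \prod_{w \in N(v)} \ell(vw)$ is a product of $\pm 1$'s (with the empty product equal to $1$ at isolated vertices). Hence $\pi_\ell$ is a map $V(G) \to \{-1, 1\}$.

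Then I would observe that, under this reduction, $\ell$ is p-proper precisely when $\pi_\ell$ is a proper $2$-vertex-colouring of $G$, since ``$\pi_\ell(u) \neq \pi_\ell(v)$'' for an edge $uv$ is exactly the statement that the two endpoints receive opposite elements of $\{-1, 1\}$. If $G$ is not bipartite, no proper $2$-vertex-colouring of $G$ exists, so no $L$-labelling of $G$ can be p-proper. This shows that $\chps(G) \neq 2$, contradicting the hypothesis, and establishes the claim.

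There is no real obstacle here: the only substantive step is the choice of $L(e) = \{-1,1\}$, which forces products to lie in a two-element multiplicative group and thereby translates p-properness into ordinary $2$-colouring of $G$. The rest is bookkeeping, with the only minor point to mention being that empty products at isolated vertices (if any) equal $1$, which causes no issue because such vertices impose no adjacency constraints.
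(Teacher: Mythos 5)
Your proof is correct and follows exactly the paper's argument: the same list assignment $L(e)=\{-1,1\}$, the observation that all products lie in $\{-1,1\}$, and the resulting equivalence between p-properness and proper $2$-vertex-colouring. The extra remarks about admissibility of $L$ and empty products are harmless bookkeeping that the paper leaves implicit.
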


The previous condition is not sufficient, however, as nice connected bipartite graphs $G$ with $\chps(G)=2$ must fulfil an additional property.

\begin{proposition}
Let $G$ be a connected bipartite graph with bipartition $(A,B)$. If $\chps(G)=2$, then at least one of $|A|$ and $|B|$ must be even.
\end{proposition}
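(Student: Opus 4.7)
The plan is to prove the contrapositive: if both $|A|$ and $|B|$ are odd, then $\chps(G) \geq 3$. To do so, I will exhibit a particular $2$-list assignment $L$ (not containing $0$) that admits no p-proper $L$-labelling.

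The natural candidate, continuing the idea used in the previous observation, is the list assignment $L$ defined by $L(e)=\{-1,1\}$ for every edge $e \in E(G)$. For any $L$-labelling $\ell$, every vertex-product $\pi_\ell(v)$ lies in $\{-1,1\}$, so $\pi_\ell$ is a proper vertex-colouring if and only if it is a proper $2$-colouring of $G$. Since $G$ is connected and bipartite with bipartition $(A,B)$, there are exactly two such proper $2$-colourings, and each assigns one fixed value $c_A \in \{-1,1\}$ to every vertex of $A$ and the opposite value $c_B = -c_A$ to every vertex of $B$.

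The key step is a parity/double-counting identity. For any $L$-labelling $\ell$, since every edge of $G$ has exactly one endpoint in $A$ and one in $B$, the product of vertex-products over $A$ equals the product of all edge labels, and similarly for $B$:
\[
\prod_{v \in A} \pi_\ell(v) \;=\; \prod_{e \in E(G)} \ell(e) \;=\; \prod_{v \in B} \pi_\ell(v).
\]
Assuming $\ell$ is p-proper, the left-hand side equals $c_A^{|A|}$ and the right-hand side equals $c_B^{|B|}$. When $|A|$ and $|B|$ are both odd, this collapses to $c_A = c_B$, contradicting $c_B = -c_A$. Hence no p-proper $L$-labelling exists, and $\chps(G) \geq 3$.

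I expect no real obstacle here: the argument is a short parity computation once the right list assignment is chosen. The only subtle point is to justify that a proper $2$-colouring of a connected bipartite graph is forced to be constant on each part, which follows from connectedness (extending the colour choice at one vertex along any path). The result then follows by contrapositive.
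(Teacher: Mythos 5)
Your proof is correct and follows essentially the same route as the paper: the same all-$\{-1,1\}$ list assignment, the forced constant value on each part by connectedness, and a double count of the edge labels over each side of the bipartition. Your multiplicative identity $\prod_{v\in A}\pi_\ell(v)=\prod_{e}\ell(e)=\prod_{v\in B}\pi_\ell(v)$ is just a cleaner packaging of the paper's parity count of edges labelled $-1$.
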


\begin{proof}
Assume the claim is wrong, and let $G$ be a connected bipartite graph with $\chps(G)=2$ in which the two parts $A$ and $B$ are of odd size. Consider $L$, the $2$-list assignment of $G$ where $L(e)=\{-1,1\}$ for every edge $e \in E(G)$. As mentioned earlier, by every $L$-labelling $\ell$ of $G$, we have $\pi_\ell(v) \in \{-1,1\}$ for every vertex $v \in V(G)$. Thus, because $G$ is connected, for such an $\ell$ to be p-proper we must have, say, $\pi_\ell(a) = -1$ for every $a \in A$ and $\pi_\ell(b) = 1$ for every $b \in B$. For the first condition to occur, for every $a \in A$ there must be an odd number of incident edges labelled $-1$ by $\ell$. Since $|A|$ is odd, this means that we must have an odd number of edges of $G$ labelled~$-1$ by $\ell$. For the second condition to occur, for every $b \in B$ there must be an even number of incident edges labelled $-1$ by $\ell$. 
For that, we must have an even number of edges of $G$ labelled~$-1$ by $\ell$, which is a contradiction.
\end{proof}

Thus, connected graphs $G$ with $\chps(G)=2$ are connected bipartite graphs with at least one part of even cardinality. This condition is necessary but still not sufficient, however, even in simple graph classes such as trees. To see this is true, consider the following easy remarks. 

Suppose we have a graph $G$ with a pending path $wvu$ of length~$2$, where $d(u)=1$ and $d(v)=2$, and suppose $L$ is a $2$-list assignment to the edges of $G$. Assume more particularly that $L(wv)=\{1,a\}$ for some $a \neq 1$. Then, note that, in any p-proper $L$-labelling $\ell$ of $G$, we cannot have $\ell(vw)=1$, as otherwise we would have $\pi_\ell(v)=\pi_\ell(u)$ whatever $\ell(vu)$ is, a contradiction. In other words, the label of $wv$ by a p-proper $L$-labelling of $G$ is forced to $a$.

From this, we can construct arbitrarily many trees $T$ with $\chps(T)=3$ and any wanted cardinality parity for the parts of its bipartition. As an illustration (which admits obvious generalisations), consider the tree $T$ with vertex set $V(T)=\{v_1,\dots,v_8\}$ and edge set $E(T)=\{v_1v_2,v_2v_5,v_3v_4,v_4v_5,v_5v_6,v_6v_7,v_7v_8\}$, and note that $T$ has no p-proper $L$-labelling for any list assignment $L$ where $L(v_6v_7)=\{1,a^2\}$ and $L(v_2v_5)=L(v_4v_5)=\{1,a\}$ (for some $a \not \in \{1,-1\}$).


\subsection{Connections with the sum variant, and first bounds on $\chp$}\label{subsection:connections-sum-variant}

As suggested by Seamone in~\cite{Sea12}, there is a straight connection between the parameters $\chs$ and $\chps$, which follows from the product rule of logarithms.
Despite this fact being easy to visualise, we give a detailed proof to establish the precise relationship between the two.

\begin{theorem}\label{theorem:chs-to-chp}
If $G$ is a nice graph, then $\chps(G) \leq 2\chs(G)-1$.
\end{theorem}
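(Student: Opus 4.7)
The plan is to reduce the multiplicative list problem to the additive one via logarithms, with an extra twist to accommodate possibly negative labels. Write $k = \chs(G)$ and let $L$ be any $(2k-1)$-list assignment to $E(G)$ assigning no label $0$. I aim to construct a p-proper $L$-labelling of $G$ from an s-proper labelling produced by invoking the definition of $\chs(G)$.

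The first step is to shrink each list to a sublist of $k$ elements sharing a common sign. Since $|L(e)| = 2k-1$ and no element equals $0$, pigeonhole guarantees that either at least $k$ elements of $L(e)$ are positive or at least $k$ are negative; in either case one can select a sign $s(e) \in \{-1,+1\}$ and a sublist $L'(e) \subseteq L(e)$ of size exactly $k$ all of whose elements have sign $s(e)$. Now form the real-valued $k$-list $L''(e) = \{\log|x| : x \in L'(e)\}$. By definition of $\chs$, the graph $G$ admits an s-proper $L''$-labelling $m$.

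Set $\ell(e) = s(e)\cdot\exp(m(e))$; then $\ell(e)\in L'(e)\subseteq L(e)$, so $\ell$ is an $L$-labelling. A direct computation, using the product rule of exponentials, gives
\[
\pi_\ell(v) \;=\; T(v)\cdot\exp\!\bigl(\sigma_m(v)\bigr),\qquad\text{where }\; T(v) := \prod_{w\in N(v)} s(vw)\in\{-1,+1\}.
\]
For an edge $uv$: if $T(u)\neq T(v)$, then $\pi_\ell(u)$ and $\pi_\ell(v)$ have opposite signs and hence differ; if $T(u)=T(v)$, then $\pi_\ell(u)=\pi_\ell(v)$ would force $\sigma_m(u)=\sigma_m(v)$, contradicting the s-properness of $m$. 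Either way $\ell$ is p-proper, which establishes the claimed bound.

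The only delicate point is the handling of negative labels: if one could insist on purely positive lists, then lists of size $k$ would suffice and the logarithm transfer would be immediate. The doubling to $2k-1$ is exactly what is needed so that the pigeonhole step can discard the minority-sign labels of each list and still retain $k$ usable ones. I do not anticipate any other genuine obstacle.
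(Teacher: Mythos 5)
Your proof is correct and follows essentially the same strategy as the paper's: use the pigeonhole principle to extract from each list of $2k-1$ nonzero labels a sublist of $k$ labels with pairwise distinct absolute values, transfer to the sum setting via logarithms, and pull back an s-proper labelling. The only (harmless) difference is in the bookkeeping: the paper selects $k$ labels with distinct absolute values regardless of sign and concludes directly from $\abs{\pi_\ell(u)}\neq\abs{\pi_\ell(v)}$, whereas you select $k$ labels of a common sign and then split into the cases of equal versus opposite sign-products $T(u),T(v)$.
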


\begin{proof}
Assume we have $\chs(G) \leq k$ for some nice graph $G$ and $k \geq 2$. We prove that $\chps(G) \leq 2k-1$. Let $L$ be a $(2k-1)$-list assignment to the edges of $G$, where none of the $L(e)$'s contains label~$0$. For every $e \in E(G)$, since $|L(e)|=2k-1$,  there must be $S(e) \subset L(e)$ such that $|S(e)|=k$ and no two elements of $S(e)$ have the same absolute value. We set $X(e) = \{|x| : x \in S(e)\}$ and $L'(e) = \{\log(x) : x \in X(e)\}$\footnote{Throughout this work, any used $\log$ function can be in any fixed base.}. Then $L'$ is a $k$-list assignment of $G$ where each edge $e$ is associated $k$ nonnegative values that are logarithms of values of $L(e)$ with different absolute values.

Our original assumption $\chs(G) \leq k$ implies that $G$ admits an s-proper $L'$-labelling $\ell'$. We now consider an $L$-labelling $\ell$ of $G$ obtained as follows. We consider every edge $e$ of $G$, and we choose, as $\ell(e)$, any label from $L(e)$ that resulted in $L'(e)$ containing $\ell'(e)$. By how $L'$ was obtained, note that, indeed, one such value belongs to $L(e)$. Thus, $\ell$ is an $L$-labelling. As a result, for every $v \in V(G)$ with incident edges $e_1, \dots, e_d$, we get $$\sigma_{\ell'}(v)=\sum_{i=1}^d \ell'(e_i)=\sum_{i=1}^d \left(\log |\ell(e_i)|\right) =\log\left(\prod_{i=1}^d |\ell(e_i)|\right)= \log(\abs{\pi_\ell(v)}).$$ 
In particular, $\ell$ is p-proper since $\ell'$ is s-proper.
\end{proof}

The connection between $\chs$ and $\chps$ in Theorem~\ref{theorem:chs-to-chp} implies that, for any constant upper bound on $\chs$ for some graph class, we deduce a constant upper bound on $\chps$ as well. In the next result, we have listed some constant bounds on $\chs$ from the literature, together with the bounds on $\chps$ we get as a consequence. It is worth emphasising that we do not claim this list to be exhaustive in any way. Namely, we only list the bounds that seem the most significant to us, and the interested reader has to be aware that more results of the sort below can be established from results mentioned in the references below. 

\begin{corollary}\label{corollary:existing-bounds}
Let $G$ be a nice connected graph.
\begin{itemize}
    \item $\chs(G) \leq \Delta(G)+1$ (see \cite{DDWWWYZ19}); thus $\chps(G) \leq 2\Delta(G)+1$.
    
    \item If $G$ is complete, complete bipartite, or a tree, then $\chs(G) \leq 3$ (see \cite{BGN09}); thus $\chps(G) \leq 5$.

    \item If $G$ is $2$-degenerate and non-bipartite, then $\chs(G) \leq 3$ (see \cite{WZ18}); thus $\chps(G) \leq 5$.
    
    \item If $G$ is a wheel, then $\chs(G) \leq 3$ (see \cite{PY13}); thus $\chps(G) \leq 5$.
    
    \item If ${\rm mad}(G) \leq \frac{11}{4}$, then $\chs(G) \leq 3$ (see \cite{LWZ18}); thus $\chps(G) \leq 5$.
    
    \item If $G$ is outerplanar, then $\chs(G) \leq 4$ (see \cite{PY13}); thus $\chps(G) \leq 7$.
    
    \item If $\Delta(G) \leq 4$, then $\chs(G) \leq 4$ (see \cite{LLM20}); thus $\chps(G) \leq 7$.
    
    \item If $G$ is $2$-connected and chordal, or a line graph, then $\chs(G) \leq 5$ (see \cite{Won21}); thus $\chps(G) \leq 9$.
    
    \item If $G$ is a planar graph, then $\chs(G) \leq 7$ (see \cite{WZ18}); thus $\chps(G) \leq 13$.
\end{itemize}
\end{corollary}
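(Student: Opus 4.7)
The plan is essentially to apply Theorem~\ref{theorem:chs-to-chp} as a black box to each of the known bounds on $\chs$ that are listed in the bullet points. For every bullet, the cited reference provides an upper bound of the form $\chs(G) \le k$ for the corresponding class; Theorem~\ref{theorem:chs-to-chp} then yields $\chps(G) \le 2k-1$ for every nice graph $G$ in the same class, and each bullet of the corollary is of exactly this form. So the whole statement reduces to a one-line invocation of Theorem~\ref{theorem:chs-to-chp} applied bullet-by-bullet.

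More explicitly, I would first recall the hypothesis: $G$ is a nice connected graph (connectivity is inherited as a restriction on the class of graphs considered, and niceness is needed for $\chs$ and $\chps$ to be well-defined). Then, for each item, I would note the source of the bound on $\chs$ and apply the inequality $\chps(G)\le 2\chs(G)-1$ provided by Theorem~\ref{theorem:chs-to-chp}. For instance, $\chs(G)\le \Delta(G)+1$ gives $\chps(G)\le 2(\Delta(G)+1)-1 = 2\Delta(G)+1$; a bound of $3$ on $\chs$ yields $5$ on $\chps$; a bound of $4$ yields $7$; a bound of $5$ yields $9$; and a bound of $7$ yields $13$. These are precisely the numerical values claimed in each item.

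There is no genuine obstacle here; the only thing to check carefully is that Theorem~\ref{theorem:chs-to-chp} is applicable to each of the classes considered. This is immediate, since the theorem requires only that $G$ be a nice graph (which is assumed in the corollary) and the bound on $\chs$ is available for every graph in the class from the cited reference. In particular, no additional argument is required to handle lists (as opposed to the fixed label set $\{1,\dots,k\}$), because Theorem~\ref{theorem:chs-to-chp} already converts any list-type bound on $\chs$ into a list-type bound on $\chps$. Thus the proof essentially reads: "each inequality on $\chps$ follows by plugging the corresponding bound on $\chs$ (from the cited reference) into Theorem~\ref{theorem:chs-to-chp}."
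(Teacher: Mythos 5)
Your proposal is correct and matches the paper's own treatment exactly: the corollary is stated as an immediate consequence of Theorem~\ref{theorem:chs-to-chp}, applied item by item to the cited bounds on $\chs$, and the arithmetic $2k-1$ for $k\in\{\Delta(G)+1,3,4,5,7\}$ yields precisely the claimed values. Nothing further is needed.
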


A consequence of the first item in Corollary~\ref{corollary:existing-bounds}, is that the List 1-2-3 Conjecture itself makes plausible the existence of a general constant upper bound on $\chps$.
In particular, we currently have no evidence that the following, which would be a legitimate guess, might be false:

\begin{lm123c}
If $G$ is a nice graph, then $\chps(G) \leq 3$.
\end{lm123c}

Recall that observations raised at the end of Subsection~\ref{subsection:remarks-chp} establish that this conjecture, if true, would actually be tight. 


\subsection{Algebraic tools}\label{subsection:algebraic-tools}

To improve, in next Section~\ref{section:improved-bounds}, some of the bounds from Corollary~\ref{corollary:existing-bounds}, we will adapt and employ an algebraic approach that was first designed by Bartnicki, Grytczuk and Niwczyk to deal with the List 1-2-3 Conjecture in~\cite{BGN09}, and which is inspired by polynomial methods developed to deal with list colouring of graphs.

Consider a graph $G$ with edges $e_1,\dots,e_m$, and a list assignment $L$ to the edges of $G$.
For a vertex $u$ and an edge $e$ of $G$, we write $e \sim u$ if $e$ is incident to $u$.
Let $\vec{G}$ be any orientation of $G$.
To each edge $e_i$ of $G$, we associate a variable $x_i$.
Now, we associate to $G$ (through $\vec{G}$) a polynomial $Q_{\vec{G}}$ with variables $x_1,\dots,x_m$, being 
$$Q_{\vec G}(x_1, \dots x_m)= \prod_{\vec{uv} \in A(\vec{G})} \left(\sum_{e_i \sim u} x_i - \sum_{e_i \sim v} x_i\right).$$
It is easy to see that $G$ has an s-proper $L$-labelling if and only if there are values $l_1 \in L(e_1),\dots,l_m \in L(e_m)$ such that $Q_{\vec{G}}(l_1,\dots,l_m)$ does not vanish. From this point of view, a powerful tool is the so-called \textit{Combinatorial Nullstellensatz} of Alon~\cite{Alon99}, which provides sufficient conditions, in terms of the sizes of the lists $L(e_1),\dots,L(e_m)$, for such values $l_1,\dots,l_m$ to be choosable.

\begin{cn}
Let $\mathbb{F}$ be an arbitrary field, and let $f=f(x_1,\dots,x_n)$ be a polynomial in $\mathbb{F}[x_1,\dots,x_n]$.
Suppose the total degree of $f$ is $\sum_{i=1}^n t_i$, where each $t_i$ is a nonnegative integer, and suppose the coefficient of $\prod_{i=1}^n x_i^{t_i}$ is nonzero.
If $S_1, \dots, S_n$ are subsets of $\mathbb{F}$ with $|S_i|>t_i$, then there are $s_1 \in S_1, \dots, s_n \in S_n$ so that $f(s_1,\dots,s_n) \neq 0$.
\end{cn}

Thus, bounds on $\chs(G)$ can be obtained via the Combinatorial Nullstellensatz through studying the monomials in the expansion of $Q_{\vec{G}}$, more precisely monomials with nonzero coefficient, maximum degree, and, preferably, low exponent values.
Note that all the monomials of $Q_{\vec{G}}$ share the very convenient property that they are all of maximum degree $m$, which is one of the prerequisites for the Combinatorial Nullstellensatz to work. The tricky part, actually, is about anticipating the coefficients of the monomials of $Q_{\vec{G}}$ (the nonzero ones, particularly), which are far from being obvious in general. In~\cite{BGN09}, the authors developed a very nice dedicated approach, which is based on studying the permanent of a particular matrix representing $Q_{\vec{G}}$.

\medskip

A similar polynomial approach can of course be applied for deducing bounds on $\chp(G)$.
The main difference is that, this time, we have to consider the products of labels incident to the vertices, instead of their sums.
More precisely, the polynomial of interest is here
$$P_{\vec G}(x_1, \dots x_m)= \prod_{\vec{uv} \in A(\vec{G})} \left(\prod_{e_i \sim u} x_i - \prod_{e_i \sim v} x_i\right).$$
Compared to the polynomial $Q_{\vec{G}}$, a big difference is that, in the expansion of $P_{\vec{G}}$, the monomials are likely to have different degrees, which means that the Combinatorial Nullstellensatz might apply to a few of them only. Even worse is that the degree of $P_{\vec{G}}$ is generally bigger than that of $Q_{\vec{G}}$, and, in particular, the exponents of the monomials generally tend to be bigger too. Note indeed that the degree of $Q_{\vec{G}}$ is precisely $m$, while the degree of $P_{\vec{G}}$ can be as large as $\sum_{uv \in E(G)} \max\{d(u),d(v)\}$ (which can be reached, e.g. when no two adjacent vertices of $G$ have the same degree). 

For these reasons, as will be seen in next Section~\ref{section:improved-bounds}, deducing bounds on $\chps$ via the Combinatorial Nullstellensatz only, seems to be viable in particular contexts only.

\section{Improved bounds on $\chps$ for some graph classes} \label{section:improved-bounds}

We here improve some of the bounds on $\chps$ from Corollary~\ref{corollary:existing-bounds}. We first consider graphs in general, in Subsection~\ref{subsection:general-bounds}. We then focus, in Subsection~\ref{subsection:bounds-particular-classes}, on particular classes of graphs, including trees, planar graphs with large girth, and subcubic graphs. In the latter subsection, the exhibited improved bounds are optimal, or close to optimal.

\subsection{General graphs}\label{subsection:general-bounds}

The bounds on $\chps$ we establish in this section are expressed as functions of the maximum degree, our goal being to improve the bound of the first item of Corollary~\ref{theorem:chs-to-chp}.
We start off by improving that bound slightly for all nice graphs. From the bound we provide, we deduce, towards the List Multiplicative 1-2-3 Conjecture, that the List 1-2-3 Conjecture, if verified, would imply that $\chps(G) \leq 5$ holds for every nice graph $G$.

\begin{theorem}\label{theorem:upper-bound-cn}
If $G$ is a nice graph, then $\chps(G) \leq 2\Delta(G)-1$.
\end{theorem}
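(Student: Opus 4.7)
My plan is to apply the Combinatorial Nullstellensatz to a reduction of the polynomial $P_{\vec G}$ obtained by pulling out a common factor. The key observation is that $x_{uv}$ divides each factor $\pi_u - \pi_v$ of $P_{\vec G}$, since $x_{uv}$ appears with exponent $1$ in both of the products $\pi_u := \prod_{e \sim u} x_e$ and $\pi_v := \prod_{e \sim v} x_e$. Fixing any orientation $\vec G$ of $G$, this yields the factorisation
\[
P_{\vec G}(x) = \Big(\prod_{e \in E(G)} x_e\Big) \cdot R_{\vec G}(x), \qquad R_{\vec G}(x) := \prod_{\vec{uv} \in A(\vec G)} \big(\pi_u^{-v} - \pi_v^{-u}\big),
\]
where $\pi_u^{-v} := \prod_{e \sim u,\, e \neq uv} x_e$. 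Since our lists avoid the label $0$, a labelling $\ell$ is p-proper if and only if $R_{\vec G}(\ell) \neq 0$, so the task reduces to finding $\ell(e) \in L(e)$ with $R_{\vec G}(\ell) \neq 0$.

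I would next check the exponent bound: for every edge $e = uv$, the maximum degree of $x_e$ in $R_{\vec G}$ is at most $2\Delta - 2$. Indeed $x_e$ does not appear in the factor of $R_{\vec G}$ corresponding to $e$ itself (both $\pi_u^{-v}$ and $\pi_v^{-u}$ exclude $x_{uv}$), and contributes exponent at most $1$ in each of the $d(u) + d(v) - 2$ other factors corresponding to edges incident to $u$ or to $v$. This sums to $(d(u)-1) + (d(v)-1) \leq 2\Delta - 2$. The bound is precisely one better than what one would get by applying the Combinatorial Nullstellensatz to $P_{\vec G}$ itself, which is where the factor-of-$x_e$ trick pays off.

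To invoke the Combinatorial Nullstellensatz I still need a top-degree monomial of $R_{\vec G}$ with nonzero coefficient. I would argue that each factor $\pi_u^{-v} - \pi_v^{-u}$ is a nonzero polynomial, because its two monomial summands involve disjoint variable supports: an edge incident to $u$ but not $v$ is necessarily distinct from any edge incident to $v$ but not $u$, and the nice-graph assumption guarantees that at least one of these monomials is nontrivial so the difference does not collapse to $0$. Since the top-degree homogeneous part of a product of polynomials equals the product of the top-degree parts of the factors, and the polynomial ring is an integral domain, the top-degree part of $R_{\vec G}$ is itself nonzero. Hence some top-degree monomial $\prod_{e} x_e^{t_e}$ has a nonzero coefficient, with each exponent bounded by $2\Delta - 2 < 2\Delta - 1 = |L(e)|$, and the Combinatorial Nullstellensatz then produces the desired p-proper $L$-labelling. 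The main obstacle I anticipate is verifying the non-vanishing of the top-degree part of $R_{\vec G}$, but this reduces cleanly to the disjoint-variable observation combined with integral-domain properties, without any explicit coefficient computation.
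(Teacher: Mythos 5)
Your proof is correct and follows essentially the same route as the paper's: apply the Combinatorial Nullstellensatz to the product polynomial, with the integral-domain argument supplying a top-degree monomial of nonzero coefficient and a per-variable count of the factors bounding its exponents. The only difference is where the final unit is saved: the paper bounds each exponent of $P_{\vec G}$ by $2\Delta(G)-1$, concludes $\chp(G)\leq 2\Delta(G)$, and then invokes Observation~\ref{observation:nozero}, whereas you first factor out $\prod_{e}x_e$ and apply the Nullstellensatz to the quotient $R_{\vec G}$ (whose exponents are bounded by $2\Delta(G)-2$) directly on the $0$-free lists --- the same saving, extracted at a different point.
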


\begin{proof}
Let us denote by $e_1,\dots,e_m$ the edges of $G$, and, for every $i \in \{1,\dots,m\}$, let $x_i$ be a variable associated to $e_i$. Now, let $\vec{G}$ be any orientation of $G$, and $P_{\vec{G}}$ be the polynomial with variables $x_1,\dots,x_m$ defined as 
$$P_{\vec G}(x_1, \dots x_m)= \prod_{\vec{uv} \in A(\vec{G})} \left(\prod_{e_i \sim u} x_i - \prod_{e_i \sim v} x_i\right).$$
As described in Subsection~\ref{subsection:algebraic-tools},
if $L$ is a list assignment of $G$, and $P_{\vec{G}}(l_1,\dots,l_m) \neq 0$ for some $l_1 \in L(e_1),\dots,l_m\in L(e_m)$, then clearly we deduce a p-proper $L$-labelling of $G$.

Let $M=cx_1^{t_1}\dots x_m^{t_m}$ be a monomial of maximum degree from the expansion of $P_{\vec{G}}$ with $c \neq 0$.
Such an $M$ has to exist, since $\mathbb{R}[x_1,\dots,x_m]$ is an integral domain.
We note that for every $i \in \{1,\dots,m\}$, we have $t_i \leq 2\Delta(G)-1$. This is because variable $x_i$ appears in at most $2\Delta(G)-1$ factors of $P_{\vec{G}}$: once due to the edge $e_i$, and at most $2\Delta(G)-2$ times dues to the other edges incident to the two ends of $e_i$.
By earlier arguments, the Combinatorial Nullstellensatz, due to the existence of $M$,  now implies that $\chp(G) \leq 2\Delta(G)$, thus our conclusion on $\chps$ by Observation~\ref{observation:nozero}. 
\end{proof}

The next bound is a significant improvement over Theorem~\ref{theorem:upper-bound-cn}, in the case of graphs having vertices with convenient neighbourhood properties.

\begin{theorem}\label{theorem:better-bound-triangle-free}
If $G$ is a nice graph with a vertex $u$ such that $d(u) \geq 2$, $N(u)$ is a stable set, and $\chps(G-u) \leq \Delta(G-u)+3$, then $\chps(G) \leq \Delta(G-u)+3$.
\end{theorem}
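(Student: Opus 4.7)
The plan is to first label $G - u$ via the hypothesis and then extend the labelling to the $d$ edges at $u$ by a Combinatorial Nullstellensatz argument localised at $u$.

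Set $k = \Delta(G-u) + 3$ and fix a $k$-list assignment $L$ of $G$ with no label~$0$. Restricting $L$ to $E(G-u)$, the hypothesis $\chps(G-u) \leq k$ gives a p-proper $L$-labelling $\ell$ of $G-u$. Write $N(u) = \{v_1,\dots,v_d\}$, $p_i = \pi_\ell(v_i)$, and let $a_i$ denote the label to be chosen for $uv_i$. Because $N(u)$ is a stable set, any neighbour $w \neq u$ of some $v_i$ satisfies $\pi_G(w) = \pi_\ell(w)$, so the extension is p-proper on $G$ exactly when, for every $i$, (a) $p_i a_i \neq \pi_\ell(w)$ for each $w \in N_{G-u}(v_i)$, and (b) $\prod_{j \neq i} a_j \neq p_i$ (after cancelling the nonzero factor $a_i$ from $\pi_G(u) \neq \pi_G(v_i)$). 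Condition~(a) forbids at most $d_G(v_i) - 1 \leq \Delta(G-u) = k - 3$ values from $L(uv_i)$, so the subset $S_i \subseteq L(uv_i)$ of labels compatible with~(a) has $|S_i| \geq 3$.

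To handle (b), I would apply the Combinatorial Nullstellensatz to the polynomial
\[
Q(a_1,\dots,a_d) \;=\; \prod_{i=1}^{d}\Bigl(\prod_{j \neq i} a_j - p_i\Bigr),
\]
whose unique monomial of maximum total degree $d(d-1)$ is $\prod_j a_j^{d-1}$, with coefficient $1 \neq 0$. A tuple $(a_1,\dots,a_d) \in S_1 \times \cdots \times S_d$ with $Q \neq 0$ --- equivalently, satisfying all instances of (b) --- therefore exists whenever $|S_i| > d - 1$ for every $i$. Since $|S_i| \geq 3$, this settles the cases $d \in \{2, 3\}$ directly.

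The main obstacle is the case $d \geq 4$, for which $|S_i| \geq 3$ no longer exceeds $d - 1$. To push the argument through, I would absorb the (a)-factors into the polynomial by setting $F_i(a_i) = \prod_{w \in N_{G-u}(v_i)}(p_i a_i - \pi_\ell(w))$ and $P = \prod_i F_i \cdot Q$, then apply the Combinatorial Nullstellensatz over the full lists $L(uv_i)$ of size $k = \Delta(G-u)+3$ (so that the required threshold on exponents becomes $|L(uv_i)| > t_i$). The hard step is then to exhibit a monomial of $P$ of maximum total degree whose exponent in every variable is at most $k - 1 = \Delta(G-u) + 2$, and whose coefficient --- expressible, after the pattern of Bartnicki, Grytczuk and Niwczyk~\cite{BGN09} adapted to the multiplicative setting, as a permanent-type expression in the $p_i$'s and the $\pi_\ell(w)$'s --- can be shown to be nonzero against any adversarial choice of lists. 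This is where the stability of $N(u)$ becomes essential, as it decouples the contributions of different $v_i$'s and ensures that no cancellation forced by a shared adjacency at $u$'s neighbourhood can occur.
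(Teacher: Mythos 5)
Your setup---labelling $G-u$ first, isolating the ``safe'' sets $S_i$ with $|S_i|\geq 3$, and reducing the remaining constraints to $\prod_{j\neq i}a_j\neq p_i$ for all $i$---matches the paper's, and your Nullstellensatz argument on $Q=\prod_i\bigl(\prod_{j\neq i}a_j-p_i\bigr)$ does settle $d\in\{2,3\}$. But the theorem must hold for every $d=d(u)\geq 2$, and your treatment of $d\geq 4$ is a genuine gap: the key step of your fallback plan (a maximum-degree monomial of $P=\prod_iF_i\cdot Q$ with all exponents at most $k-1$ and provably nonzero coefficient) is left open, and in fact that plan cannot succeed. Each $F_i$ is a product of $d_{G-u}(v_i)$ binomials in the single variable $a_i$, and each factor of $Q$ has leading term $\prod_{j\neq i}a_j$, so $P$ has a \emph{unique} monomial of maximum total degree, namely a nonzero constant times $\prod_ia_i^{\,d_{G-u}(v_i)+d-1}$. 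When $d_{G-u}(v_i)$ is close to $\Delta(G-u)=k-3$ and $d\geq 4$, the exponent $d_{G-u}(v_i)+d-1$ exceeds $k-1$, and the Combinatorial Nullstellensatz yields nothing (it applies only to monomials of maximum total degree, so one cannot retreat to a lower-degree monomial); no permanent computation will rescue this.

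The missing idea---and the one the paper uses---is a logarithmic change of variables. Since $|S_i|\geq 3$, each $S_i$ contains two labels $a_i,b_i$ with $|a_i|\neq|b_i|$. Setting $y_i=\log|x_i|$, the condition $\prod_{j\neq i}x_j\neq p_i$ is implied by $\sum_{j\neq i}y_j\neq\log|p_i|$, so one applies the Nullstellensatz to $\prod_{i=1}^{d}\bigl(\sum_{j\neq i}y_j-\log|p_i|\bigr)$, whose total degree is only $d$ and whose monomial $y_1\cdots y_d$ has coefficient equal to the number of derangements of $\{1,\dots,d\}$, hence strictly positive for every $d\geq 2$. Two admissible values per variable then suffice, uniformly in $d$, and that is exactly what the pairs $\{a_i,b_i\}\subseteq S_i$ provide. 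Your proof is salvageable, but this transformation (or some substitute covering all $d\geq 4$) is an essential ingredient, not a technical detail.
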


\begin{proof}
Let $L$ be a $(\Delta(G-u)+3)$-list assignment to the edges of $G$, and let $L'$ be the restriction of $L$ to the edges of $G'=G-u$. Since $\chps(G') \leq \Delta(G')+3$, there is a p-proper $L'$-labelling $\ell'$ of $G'$. Our aim is to extend $\ell'$ to a p-proper $L$-labelling $\ell$ of $G$, by considering the edges $uv_1,\dots,uv_d$ ($d \geq 2$) incident to $u$, and, for each one $uv_i$ of them, assigning it a label from $L(uv_i)$ so that, eventually, no  conflict arises. For every $i \in \{1, \dots, d\}$, we denote by $z_i$ the current product of $v_i$ (i.e., by $\ell'$).

Because $N(v)$ is a stable set, note that assigning a label to any $uv_i$ completely determines the product of $v_i$, in the sense that all edges incident to $v_i$ get labelled. Thus, when labelling $uv_i$, we must ensure that $v_i$ does not get in conflict with its neighbours different from $u$. Since $|N(v_i) \setminus \{u\}| \leq \Delta(G-u)$, and because $|L(uv_i)| = \Delta(G-u)+3$, there are at least three distinct values in $L(uv_i)$ that can be assigned to $uv_i$ without causing any  conflict between $v_i$ and its neighbours different from $u$. For every $i \in \{1,\dots,d\}$, we denote by $S_i$ this subset of ``safe'' values of $L(uv_i)$ for $uv_i$. Because $|S_i| \geq 3$, there are in $S_i$ at least two values $a_i,b_i$ such that $|a_i| \neq |b_i|$. 

We will be done if we can find an assignment of $a_i$'s and $b_i$'s to the $uv_i$'s for which $u$ gets in  conflict with none of the $v_i$'s. Such an assignment is actually guaranteed to exist by the Combinatorial Nullstellensatz. Indeed, for every $i \in \{1, \dots, d\}$, let $x_i$ be a variable associated to the edge $uv_i$. Consider the polynomial $$P(x_1,\dots,x_d)=\prod_{i=1}^d \left(\prod_{j = 1}^d x_j - x_iz_i\right).$$
For every $i \in \{1, \dots, d\}$, set $y_i=\log(|x_i|)$.
Note that considering $P$ is similar to considering $$P'(y_1,\dots,y_d)=\prod_{i=1}^d \left(\sum_{\substack{j=1 \\ j\neq i}}^d y_j - \log(z_i)\right),$$ which, because the $z_i$'s are constants, in the current context is similar to studying $$P''(y_1,\dots,y_d)=\prod_{i=1}^d \sum_{\substack{j=1 \\ j\neq i}}^d y_j.$$ We remark that the monomial $y_1 \dots y_d$ in the expansion of $P''$ has strictly positive coefficient. Thus, by the Combinatorial Nullstellensatz, we can assign values to the $y_i$'s so that $P''$ (and thus $P'$) does not vanish, assuming we have at least two possible values to choose from for each of them. From this, we get that we can assign values to the $x_i$'s so that $P$ does not vanish as long as we have at least two possible values with distinct absolute values to choose from, for each of them. Particularly, this means that we can assign a label from $\{a_i,b_i\}$ to every $uv_i$, in such a way that a p-proper $L$-labelling of $G$ results.
\end{proof}

Together with checking a few base cases (which is done through results in next Subsection~\ref{subsection:bounds-particular-classes}), Theorem~\ref{theorem:better-bound-triangle-free} implies the following:

\begin{corollary}
If $G$ is a nice triangle-free graph, then $\chps(G) \leq \Delta(G)+3$.
\end{corollary}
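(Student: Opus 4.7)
My plan is to establish the corollary by induction on $|V(G)|$, using Theorem~\ref{theorem:better-bound-triangle-free} as the reduction step. Since $p$-proper labellings can be chosen independently on each connected component of $G$, it suffices to bound $\chps$ on every connected component separately, so I may assume $G$ to be connected.

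For the base case, I would treat $\Delta(G) \leq 2$ directly: a nice, triangle-free, connected graph of maximum degree at most $2$ is either a single vertex, a path $P_n$ with $n \geq 3$, or a cycle $C_n$ with $n \geq 4$, and in all cases the first item of Corollary~\ref{corollary:existing-bounds} gives $\chps(G) \leq 2\Delta(G)+1 \leq 5 \leq \Delta(G)+3$ when $\Delta(G) = 2$, with the degenerate cases $\Delta(G) \leq 1$ being immediate.

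For the inductive step, assume $\Delta(G) \geq 3$. Since $G$ is triangle-free, $N(u)$ is a stable set for every vertex $u$, so Theorem~\ref{theorem:better-bound-triangle-free} applies as soon as I can find $u$ with $d(u) \geq 2$ such that $G-u$ is nice. Granted such a $u$, the graph $G-u$ is automatically triangle-free as a subgraph of $G$, the induction hypothesis yields $\chps(G-u) \leq \Delta(G-u)+3$, and Theorem~\ref{theorem:better-bound-triangle-free} then delivers $\chps(G) \leq \Delta(G-u)+3 \leq \Delta(G)+3$, as desired.

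The main obstacle is therefore exhibiting such a vertex $u$, since removing the wrong vertex could create a $K_2$ component and destroy niceness. A short case analysis, using triangle-freeness, shows that $G-u$ acquires a $K_2$ component precisely when $u$ admits a \emph{pendant $P_2$}, i.e., a neighbour $a$ with $d(a)=2$ whose second neighbour $b$ is a leaf. I would then argue that when $\Delta(G) \geq 3$, not every vertex of degree at least $2$ in $G$ can carry a pendant $P_2$: indeed, picking $u$ of maximum degree (so $d(u) \geq 3$), its hypothetical pendant $u-a-b$ forces $d(a)=2$ and $d(b)=1$, and for $a$ itself to carry a pendant $P_2$ one would need some $x \in N(a) = \{u,b\}$ with $d(x)=2$, which is impossible since $d(u) \geq 3$ and $d(b)=1$. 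This contradiction produces a vertex $u$ of degree at least $2$ with no pendant $P_2$, completing the induction.
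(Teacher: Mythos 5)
Your proof is correct and follows exactly the route the paper intends: induction with Theorem~\ref{theorem:better-bound-triangle-free} as the reduction step and the $\Delta\leq 2$ graphs as base cases (the paper handles these via Subsection~\ref{subsection:bounds-particular-classes} rather than the $2\Delta+1$ bound, but either suffices). The paper states this corollary with only a one-line justification, and your pendant-$P_2$ analysis correctly supplies the one nontrivial detail it leaves implicit, namely that one can always pick a degree-$\geq 2$ vertex $u$ whose removal keeps $G-u$ nice.
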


\subsection{Particular classes of graphs}\label{subsection:bounds-particular-classes}

\subsubsection*{Paths and cycles}

Note that Theorem~\ref{theorem:upper-bound-cn} implies that the List Multiplicative 1-2-3 Conjecture holds for nice graphs $G$ with $\Delta(G) \leq 2$, i.e., paths and cycles. In such simple cases, this can actually be refined to a tightest result. In the sequel, for an $n \geq 2$ (or an $n \geq 3$ in the case of a cycle), we denote by $P_n$ and $C_n$ the path and cycle, respectively, of length~$n$.

\begin{theorem}\label{theorem:cycles}
For an $n \geq 2$, we have:
\begin{itemize}
    \item $\chps(P_n)=2$ if $n$ is even or $n=3$; 
    \item $\chps(P_n)=3$ otherwise.
\end{itemize}
For an $n \geq 3$, we have:
\begin{itemize}
    \item $\chps(C_n)=2$ if $n \equiv 0 \bmod 4$; \item $\chps(C_n)=3$ otherwise.
\end{itemize}
\end{theorem}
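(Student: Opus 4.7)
The plan is to reduce p-properness on paths and cycles to explicit conditions on the label sequence, split those conditions into two chains indexed by edge-parity, and then invoke standard 2-choosability results.

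Writing the edges of $P_n$ (resp.\ $C_n$) as $e_1, \dots, e_n$ in consecutive order and $l_i = \ell(e_i)$, a direct computation of vertex products yields: an $L$-labelling of $P_n$ is p-proper iff $l_2 \neq 1$, $l_{n-1} \neq 1$, and $l_i \neq l_{i+2}$ for every $1 \leq i \leq n-2$; for $C_n$ only the conditions $l_i \neq l_{i+2}$ for all $i$ modulo $n$ remain. These constraints decouple on the odd-indexed and even-indexed subsequences of labels: inside each subsequence the constraint $l_i \neq l_{i+2}$ amounts to a proper 2-list-colouring on a path (for $P_n$) or a cycle (for $C_n$) whose length is essentially $n/2$, while the two endpoint conditions $\neq 1$ land in specific chains depending on the parity of $n$. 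The uniform upper bound $\chps \leq 3$ follows from Theorem~\ref{theorem:upper-bound-cn} since $\Delta \leq 2$, so the task reduces to deciding which graphs achieve $2$ rather than $3$.

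For the upper bound $\chps = 2$: when $n$ is even, each chain of $P_n$ is a path of length $n/2$ with at most one endpoint forced to avoid $1$, and a greedy argument starting from that endpoint always works; the case $P_3$ is trivial, as the even chain reduces to the single condition $l_2 \neq 1$. For $C_n$ with $n \equiv 0 \pmod 4$, both chains are cycles of length $n/2$ (even), which are 2-choosable by a classical result on list-colouring of cycles. For the lower bound $\chps \geq 3$ I would exhibit explicit bad 2-list assignments. For $P_n$ with $n$ odd and $n \geq 5$, the even chain has length $(n-1)/2 \geq 2$ with \emph{both} endpoints forced to avoid $1$; when $(n-1)/2$ is even, the uniform assignment $L(e_{2i}) = \{1,a\}$ (for any $a \notin \{0,1\}$) forces the alternating pattern $a, 1, a, 1, \dots$ which collides with $l_{n-1} \neq 1$; when $(n-1)/2$ is odd, I would use the ``overlapping-pair'' assignment $L(e_2) = \{1, a_1\}$, $L(e_{2i}) = \{a_{i-1}, a_i\}$ for $2 \leq i \leq (n-3)/2$, and $L(e_{n-1}) = \{1, a_{(n-3)/2}\}$ with distinct $a_i$'s all different from $1$: the condition $l_2 \neq 1$ forces $l_2 = a_1$, which uniquely propagates to $l_{n-3} = a_{(n-3)/2}$, while $l_{n-1} \neq 1$ forces $l_{n-1} = a_{(n-3)/2}$ too, contradicting $l_{n-1} \neq l_{n-3}$. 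For $C_n$ with $n \not\equiv 0 \pmod 4$, the uniform assignment $L(e_i) = \{-1, 1\}$ reduces the chain constraints to proper $2$-colouring of an odd cycle (of length $n$ when $n$ is odd, of length $n/2$ when $n \equiv 2 \pmod 4$), which is impossible.

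The main obstacle is the lower bound for $P_n$ with $n$ odd and $n \geq 5$, because it requires a case split on the parity of $(n-1)/2$ and a careful choice of list values so that the double forcing at both endpoints of the even chain admits no escape; by contrast, the upper bounds and the cycle lower bounds all reduce cleanly to well-known facts about 2-choosability of paths and cycles.
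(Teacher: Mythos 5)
Your proposal is correct and follows essentially the same route as the paper: both reduce p-properness to the distance-two constraints $l_i\neq l_{i+2}$ (plus $l_2,l_{n-1}\neq 1$ for paths), settle the upper bounds via 2-choosability of the resulting parity chains, and settle the lower bounds via explicit bad 2-lists (the paper handles $n\equiv 1\bmod 4$ paths through the bipartition-parity proposition and cycles through the choice number of odd cycles, but these are cosmetic differences from your uniform $\{1,a\}$ and $\{-1,1\}$ assignments). The only nitpick is that your overlapping-pair values $a_i$ must also be chosen nonzero so that the adversarial lists are admissible for $\chps$.
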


\begin{proof}
We deal with cycles first.
Let us denote by $e_0,\dots,e_{n-1}$ the successive edges of $C_n$, and by $v_0,\dots,v_{n-1}$ its successive vertices, where $e_i=v_iv_{i+1}$ for every $i \in \{0,\dots,n-1\}$ (where, here and further, operations over the indexes are understood modulo~$n$). For any two adjacent vertices $v_i$ and $v_{i+1}$, note that, in order to get $\pi_\ell(v_i) \neq \pi_\ell(v_{i+1})$ by a labelling $\ell$ of $C_n$, we must have $\ell(v_{i-1}v_i) \neq \ell(v_{i+1}v_{i+2})$. Thus, for $\ell$ to be p-proper, any two edges of $C_n$ at distance~$2$ apart must be assigned different labels.

Now consider $G$, the graph constructed from $C_n$ by adding one vertex $v_{e_i}$ in $G$ for every edge $e_i$ of $C_n$, and adding an edge $v_{e_i}v_{e_j}$ between any two vertices $v_{e_i},v_{e_j}$ of $G$ if $e_i$ and $e_j$ are at distance exactly~$2$ in $C_n$. By a remark above, we have $\chps(C_n)=\ch(G)$ (where $\ch(G)$ refers to the usual choice number of $G$). Note that $G$ is an odd-length cycle when $n$ is odd, an union of two odd-length cycles when $n \equiv 2 \bmod 4$, and an union of two even-length cycles when $n \equiv 0 \bmod 4$. Since even-length cycles have choice number~$2$ and odd-length cycles have choice number~$3$ (see e.g.~\cite{ERT79}), the result follows.

\medskip

Regarding paths, remark first that if $n \equiv 1 \bmod 4$, then $P_n$ is a bipartite graph in which the two parts of the bipartition have odd cardinality. As described at the end of Subsection~\ref{subsection:remarks-chp}, we must have $\chps(P_n)>2$ in such a situation, and we actually have $\chps(P_n) = 3$ by Theorem~\ref{theorem:upper-bound-cn}. 

Let us now consider the remaining values of $n$.
For a given $n \geq 2$, similarly as in the case of cycles, let us denote by $e_1, \dots, e_n$ the successive edges of $P_n$, and by $v_1, \dots, v_{n+1}$ its vertices, where $e_i=v_iv_{i+1}$ for every $i \in \{1,\dots,n\}$. Note that, contrarily to the case of cycles, labelling $P_n$ is not similar to colouring $G$, the constraint graph of the edges at distance~$2$ in $P_n$, because, when labelling $P_n$, we must also guarantee that $e_2$ and $e_{n-1}$ are not assigned label~$1$, so that $v_1$ and $v_2$ and not in conflict, and similarly for $v_n$ and $v_{n+1}$. Note that $G$ is here the union of two (possibly empty) paths; the new labelling constraint is similar to forbidding one colour for each of $v_{e_2}$ and $v_{e_{n-1}}$. A problem is when these two vertices are the ends of a same path of $G$.

Indeed, from this remark, we note that in the cases where  $n \equiv 3 \bmod 4$ as well, there are $2$-list assignments $L$ of $P_n$ such that $P_n$ has no p-proper $L$-labelling. Indeed, note that the set of edges $\{e_2,e_4,\dots,e_{n-1}\}$ has odd cardinality due to the value of $n$. Recall that every two edges at distance~$2$ in $P_n$ in this set must receive distinct labels by a p-proper labelling. Then note that if $L(e_2)=\{1,a\}$ for some $a \not \in \{-1,1\}$ and $L(e_{2k})=\{a,b\}$ for some $b \not \in \{1,a\}$ for every other edge $e_{2k} \not \in \{e_2,e_{n-1}\}$, then, depending on the value of $n$, for either $L(e_{n-1})=\{1,a\}$ or $L(e_{n-1})=\{1,b\}$ label~$1$ must be assigned to one of $e_2$ and $e_{n-1}$ by a p-proper $L$-labelling, which creates a conflict, a contradiction. Then $\chps(P_n)>2$ for such a value of $n$, and we have $\chps(P_n) = 3$ by Theorem~\ref{theorem:upper-bound-cn}. 

Let us now consider the remaining cases, i.e., those where $n=3$ or $n$ is even. Let $L$ be a $2$-list assignment of $P_n$. We deduce a p-proper $L$-labelling $\ell$ of $P_n$ is the following way:
\begin{itemize}
    \item If $n=3$, then first assign to $e_2$ a label from $L(e_2)$ different from~$1$, before assigning distinct labels from $L(e_1)$ and $L(e_3)$ to $e_1$ and $e_3$, respectively.
    Clearly, $\ell$ is p-proper.
    
    \item Assume $n$ is even. If $n=2$, then, clearly, we are done when assigning labels from $L(e_1)$ and $L(e_2)$ different from~$1$ to $e_1$ and $e_2$, respectively. So assume $n \geq 4$. We first label the edges $e_1,e_3,\dots,e_{n-1}$ with odd index with labels from their respective lists, in such a way that 1) $\ell(e_{n-1}) \neq 1$, and that 2) no two of these edges at distance~$2$ are assigned the same label. These conditions can clearly be achieved by labelling these edges one by one following the ordering $e_{n-1}, e_{n-3}, \dots, e_1$. We then achieve the same thing for the edges $e_2,e_4,\dots,e_n$ with even index, so that 1) $\ell(e_2) \neq 1$, and that 2) no two of these edges at distance~$2$ are assigned a same label. Again, this can be easily achieved, e.g. by labelling these edges following the ordering $e_2,e_4,\dots,e_n$. By  arguments above, $\ell$ is eventually p-proper. \qedhere 
\end{itemize}
\end{proof}


\subsubsection*{Trees}

We now prove an upper bound on $\chps$ in the case of trees.
The exhibited bound is optimal in general, due to some of the remarks at the end of Subsection~\ref{subsection:remarks-chp}. Even some paths attain the upper bound, recall Theorem~\ref{theorem:cycles}.

\begin{theorem}\label{theorem:trees}
If $T$ is a nice tree, then $\chps(T) \leq 3$.
\end{theorem}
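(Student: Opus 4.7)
The plan is to proceed by induction on $|V(T)|$, with small cases ($|V(T)|$ up to some constant, covering small paths, stars, and double-stars) handled directly: for paths, by Theorem~\ref{theorem:cycles}, and for stars $K_{1,k}$, by a direct case analysis on the lists (the constraints reduce to ensuring $\prod_{j \neq i} x_j \neq 1$ for each $i$, which admits a valid assignment from 3-element lists).

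For the inductive step, given a tree $T$ of sufficient size, I would pick a longest path $v_0, v_1, \dots, v_d$ of $T$. By maximality, every neighbor of $v_1$ other than $v_2$ is a leaf, so $v_1$ is a ``support vertex'' with some leaf-neighbors $u_1 = v_0, u_2, \dots, u_k$ and a single non-leaf neighbor $w = v_2$. I would consider $T' = T - \{u_1, \dots, u_k\}$. If $T'$ is nice (i.e., still has at least two edges), the induction hypothesis yields a p-proper $L'$-labelling $\ell'$ of $T'$, where $L'$ is the restriction of $L$ to $E(T')$. The remaining degenerate cases (where $T'$ is $K_2$ or a single vertex) correspond to $T$ being a star or double-star and are handled as additional base cases.

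The extension step then requires choosing values $x_i \in L(u_iv_1)$ for $i = 1,\dots,k$. Setting $a = \ell'(v_1w)$ and $A = \pi_{\ell'}(w)$, the constraints are:
\begin{align*}
a \cdot x_1 x_2 \cdots x_k &\neq A \quad (\text{avoid } \pi_\ell(v_1) = \pi_\ell(w)), \\
a \cdot \prod_{j \neq i} x_j &\neq 1 \quad (\text{avoid } \pi_\ell(v_1) = \pi_\ell(u_i) = x_i),\ \text{for each } i \in \{1,\dots,k\}.
\end{align*}
The heart of the argument is showing that such $x_1,\dots,x_k$ always exist within the given 3-element lists.

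The main obstacle is that the number of constraints ($k+1$) grows with $k$, so a generic Combinatorial Nullstellensatz approach (which would demand the polynomial's degree be at most $2k$, whereas the relevant product polynomial has degree $\sim k^2$) or a naive union-bound counting (which would require $(k+1)3^{k-1} < 3^k$, i.e.\ $k \leq 1$) both fail for large $k$. To circumvent this, I would exploit the common multiplicative form of the constraints: each forbidden event fixes almost the entire product $\prod_j x_j$. Concretely, after fixing $x_1,\dots,x_{k-1}$, the $k+1$ bad events translate, up to a common multiplicative factor, to $x_k$ avoiding the set $\{A, x_1, \dots, x_k\}$ (or, for the case $i = k$, a constraint independent of $x_k$ which must be handled by adjusting an earlier variable). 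A careful analysis of these bad values--noting when they coincide and when one of $x_1,\dots,x_{k-1}$ can be perturbed to kill the $i=k$ constraint--reduces the effective number of forbidden choices for $x_k$ to at most two, leaving a valid selection in the 3-element list. Combining this extension argument with the small-case verifications yields $\chps(T) \leq 3$.
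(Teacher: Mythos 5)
Your reduction is essentially the paper's: take a deepest support vertex (your $v_1$ on a longest path), peel off its leaf-children, apply induction to the rest, and reduce everything to the following star-extension problem: given $a$, $A$ and $3$-lists $L_1,\dots,L_k$ avoiding $0$, choose $x_i\in L_i$ so that $a\prod_j x_j\notin\{A,x_1,\dots,x_k\}$. (The paper reaches the same point after also disposing of pending paths of length $2$ and $\geq 3$; your longest-path choice sidesteps that, which is fine.) The problem is that you have not proved this star-extension lemma, and it is the entire content of the theorem. Your argument for it is: fix $x_1,\dots,x_{k-1}$ with product $Q$, observe that $x_k$ must avoid the $k$ values $\frac{1}{aQ}\{A,x_1,\dots,x_{k-1}\}$ plus satisfy the $x_k$-free constraint $aQ\neq 1$, and then assert that ``a careful analysis\dots reduces the effective number of forbidden choices for $x_k$ to at most two.'' That assertion is exactly the difficulty, and it is not clear it can be rescued in this last-variable-greedy form: for a fixed $Q$ the forbidden set can genuinely meet $L_k$ in all three of its elements, and perturbing some earlier $x_j$ changes $Q$ and hence rescales \emph{all} $k$ forbidden values simultaneously (while also threatening the constraint $aQ\neq 1$), so there is no local repair. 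Your star base case (``a direct case analysis on the lists'') hides the same unproven core, since it is the case $a=1$ with no $A$.

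For comparison, the paper's Claim~3.7 proves precisely this lemma, and it takes real work: when some list $L(uw_1)$ contains three values of pairwise distinct absolute values, it applies the Combinatorial Nullstellensatz to the \emph{log-transformed} polynomial $\bigl(\log a+\sum_i y_i-\log A\bigr)\prod_i\bigl(\log a+\sum_j y_j-y_i\bigr)$, whose degree is only $q+1$ (your degree-$k^2$ objection to the polynomial method evaporates after this substitution, at the price of only distinguishing absolute values); and when every list has the form $\{\alpha_i,\beta_i,-\beta_i\}$, so that the log trick lacks room, it runs a separate sign-based case analysis ($W^+$ versus $W^-$) to kill most conflicts by parity of signs and handle the rest with a two-factor Nullstellensatz application. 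You would need to supply an argument of comparable substance for your ``at most two forbidden values'' claim, or adopt one of these two mechanisms; as written, the proof has a genuine gap at its central step.
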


\begin{proof}
The proof is by induction on the number of vertices and edges of $T$. The base case is when $T$ is a path of length~$2$, in which situation the claim holds by Theorem~\ref{theorem:cycles}. Thus, we can focus on proving the general case. Let $L$ be a $3$-list assignment to the edges of $T$.

We can assume that $T$ has no pending path of length at least~$3$, i.e., a path $uvwx$ such that $d(u)=1$, $d(v)=d(w)=2$, and $d(x) \geq 2$. Indeed, assume $T$ has such a path. Let $T'=T-\{u,v\}$. Clearly $T'$ is nice (as otherwise $T$ would be a path, a case for which Theorem~\ref{theorem:cycles} yields the desired conclusion), and thus $T'$ admits a p-proper $L'$-labelling $\ell'$, where $L'$ denotes the restriction of $L$ to the edges of $T'$. To extend $\ell'$ to a p-proper $L$-labelling of $T$, we  have to assign to $uv$ and $vw$ labels from their lists, so that no conflict arises. To that aim, we first assign to $vw$ a label different from 
$1$ 
and from $\frac{\pi_{\ell'(x)}}{\ell'(xw)}$ so that $w$ does not get in conflict with $x$. 
Note that this is possible since $|L(vw)|=3$. Note that, now, because $\ell(vw) \neq 1$, whatever label we assign to $uv$, we cannot get a conflict between $u$ and $v$. Thus, when labelling $uv$, we just need to make sure that $v$ does not get in conflict with $w$, which can easily be ensured since $|L(uv)|=3$.

We may also assume that $T$ has branching vertices, i.e., vertices with degree at least~$3$. Indeed, if $T$ has no branching vertex, then $T$ is a path, $\Delta(T)=2$, and the claim follows from Theorem~\ref{theorem:cycles}. 
So assume that $T$ has branching vertices.
Root $T$ at any branching vertex $r$. 
This defines the usual root-to-leaf orientation,
through which every non-root vertex has a unique \textit{parent}, i.e., a neighbour that is closer to $r$, and every non-leaf vertex $v$ has \textit{sons}, i.e., neighbours that are farther from $r$, and, more generally, \textit{descendants}, i.e., vertices for which the unique path to $r$ goes through $v$.

Let $u$ be a branching vertex of $T$ that is at farthest distance from $r$. 
Note that we have $u = r$ if $r$ is the unique branching vertex of $T$.
By this choice, $u$ has at least two descendants, all of which have degree at most~$2$. In other words, the descendants of $u$ form $k \geq 2$ disjoint pending paths, none of which has length more than~$2$, as mentioned earlier. There are then $k=p+q \geq 2$ pending paths attached at $u$ formed by its descendants, where $p \geq 0$ of these paths have length~$2$, while $q \geq 0$ of them have length~$1$. We denote by $v_1,\dots,v_p, w_1, \dots, w_q$ the sons of $u$, where $v_1, \dots, v_p$ belong to pending paths of length~$2$, while $w_1,\dots,w_q$ are leaves. We also denote by $v_1', \dots, v_p'$ the neighbour of $v_1,\dots,v_p$, respectively, different from $u$. Thus, the $v_i$'s have degree~$2$, while the $v_i'$'s and the $w_i$'s have degree~$1$. Lastly, we denote by $t$ the parent of $u$, 
if it exists (recall that we have $u=r$ when $T$ has only one branching vertex, in which case $u$ has no parent).

Let $T'=T-\set{v_1,\dots,v_p,v'_1,\dots,v'_p,w_1,\dots,w_q}$. The tree $T'$ is nice, because either $r$ is a branching vertex (case where $u \neq r$) or $T'$ consists in only one vertex (case where $u=r$), and thus $T'$ admits a p-proper $L'$-labelling $\ell'$, where $L'$ denotes the restriction of $L$ to the edges of $T'$. To extend $\ell'$ to a p-proper $L$-labelling of $T$, we just have to assign labels from their lists to the edges incident to the descendants of $u$, so that no  conflict arises. 

We distinguish several cases, based mainly on the value of $q$.

\begin{itemize}
    \item Suppose that $q =0$. Label every edge $uv_i$ with $i \in \{1, \dots, p-1\}$ with an arbitrary label from $L(uv_i)$ different from $1$. Now, label $uv_p$ with a label from $L(uv_p)$ different from~$1$ so that $u$ does not get in conflict with $t$, if it exists (in case it does not, just assign any label different from $1$ to $uv_p$). Note that this is possible since $|L(uv_p)|=3$.
    Lastly, consider every edge $v_iv_i'$. Since $\ell(uv_i) \neq 1$,
    note that $v_i$ and $v_i'$ cannot get in conflict, whatever label from $L(v_iv_i')$ is assigned to $v_iv_i'$. Thus, when labelling $v_iv_i'$, we just need to ensure that $v_i$ and $u$ do not get in conflict, which can be avoided since $|L(v_iv_i')|=3$.
    
    \item Suppose now that $q=1$. Recall that $p \geq 1$ since $k=p+q \geq 2$. We start by labelling, for every $i \in \{1, \dots, p-1\}$, the edge $uv_i$ with any label different from~$1$, chosen from $L(uv_i)$. We then consider $uv_p$, and assign to this edge a label from $L(uv_p)$ different from~$1$ so that the resulting partial product of $u$ is different from~$1$. Note that this is possible since $|L(uv_p)|=3$. Now, note that, by this choice of $\ell(uv_p)$, no matter what $\ell(uw_1)$ is, we cannot get a conflict between $u$ and $w_1$. We then assign as $\ell(uw_1)$ a label from $L(uw_1)$ so that $u$ does not get in conflict with $t$ (if it exists). Lastly, we consider every $i \in \{1, \dots, p\}$, and, to every edge $v_iv_i'$, we assign a value from $L(v_iv_i')$ so that $v_i$ and $u$ do not get in conflict. This results in $\ell$ being p-proper. Recall, in particular, that any two $v_i,v_i'$ cannot be in conflict since $\ell(uv_i)\neq 1$.
\end{itemize}

Suppose now that $q \geq 2$.
We start by stating the following general claim:

\begin{claim}\label{claim:star-cn}
Let $S$ be a star with center $u$ and $q+1 \geq 3$ leaves $t,w_1,\dots,w_q$.
Assume we have a partial labelling $\ell'$ of $S$ where $ut$ is the only edge being assigned a label, $a$, and that $t$ has (virtual) product $\pi_{\ell'}(t)=A$.
If $L$ is a $3$-list assignment to the $uw_i$'s,
then, for every $i \in \{1,\dots,q\}$, we can assign a label from $L(uw_i)$ to $uw_i$, so that $\ell'$ is extended to a labelling $\ell$ of $S$
verifying $\pi_\ell(u) \not \in \{A,\pi_\ell(w_1),\dots,\pi_\ell(w_q)\}$.
\end{claim}

\begin{proofclaim}
Suppose first that $q=2$.
We first assign to $uw_1$ a label from $L(uw_1)$ different from $1/a$.
This way, no matter what label is assigned to $uw_2$, note that $u$ and $w_2$ cannot get in conflict. We now assign a label from $L(uw_2)$ to $uw_2$ so that the resulting product of $u$ is different from $A$ and the product of $w_1$. This is possible since $|L(uw_2)|=3$.

Assume now that $q \geq 3$. We distinguish the following cases:

\begin{itemize}
    \item Assume, w.l.o.g., that the three values in $L(uw_1)$ have pairwise distinct absolute values. To each edge $uw_i$, we associate a variable $x_i$, and we consider the polynomial $$P(x_1,\dots,x_q)= \left(a \prod_{i=1}^q x_i - A \right) \cdot \prod_{i=1}^q \left(a \prod_{j=1}^q x_j-x_i \right).$$
    For every $i \in \{1,\dots,q\}$, we set $y_i=\log x_i$.
    Now the polynomial $P$ becomes equivalent to $$P'(y_1,\dots,y_q)=\left(\log(a)+\sum_{i=1}^q y_i - \log(A)\right) \cdot \prod_{i=1}^q \left(\log(a) + \sum_{j=1}^q y_j - y_i\right).$$ Note that, in the expansion of $P'$, the monomial $y_1^2 y_2 \dots y_q$ has strictly positive coefficient. 
    Thus, by the Combinatorial Nullstellensatz, we can assign values to the $y_i$'s so that $P'$ does not vanish, as long as we are given a set of at least three possible distinct values as $y_1$, and a set of at least two possible distinct values as each of $y_2,\dots,y_q$. In turn, this means we can assign values to the $x_i$'s so that $P$ does not vanish, as long as we have a set of at least three possible values with pairwise distinct absolute values as $x_1$, and a set of at least two possible values with distinct absolute values as each of $x_2,\dots,x_q$. Recall that we made the assumption that the three values in $L(uw_1)$ have pairwise distinct absolute values, while, for every $i \in \{2,\dots,q\}$, there must be at least two values in $L(uw_i)$ with distinct absolute values, since $|L(uw_i)|=3$. Thus, $\ell'$ can correctly be extended to $\ell$, in the desired way.
    
    \item Now assume that every $L(uw_i)$ is of the form $\{\alpha_i,\beta_i,-\beta_i\}$, where $\alpha_i$ and $\beta_i$ are distinct values with the same sign. Let us start from the labelling $\psi$ of $S$ obtained from $\ell'$ after setting $\ell(uw_i)=\alpha_i$ for every $i \in \{1,\dots,q\}$. We denote by $s \in \{-,+\}$ the sign of $\pi_\psi(u)$, while, for every sign $\epsilon \in \{-,+\}$, we denote by $W^\epsilon$ the set of vertices $w_i$ for which the sign of $\pi_\psi(w_i)$ (thus, of $\alpha_i$ and $\beta_i$) is $\epsilon$. Note that $W^-$ and $W^+$ partition the $w_i$'s.
    
    To conclude the proof, we consider two last main cases.
    
    \begin{itemize}
        \item Suppose that $s=+$ and $W^- = \emptyset$.
        We start by assigning label $-\beta_1$ from $L(uw_1)$ to $uw_1$.
        Note that, as long as each $uw_i$ with $i \in \{2,\dots,q\}$ is assigned a label from $\{\alpha_i,\beta_i\}$, we cannot get a conflict between $u$ and $w_i$ due to their products having different signs. Thus, under that convention, the only conflicts we must pay attention to, are along the edges $uw_1$ and, possibly, $ut$ (in case $A$ is negative).
        
        We here assign a variable $x_i$ to each edge $uw_i$ with $i \in \{2, \dots, q\}$, and consider $$P(x_2,\dots,x_q)= \left(-\beta_1 a \prod_{i=2}^q x_i - A\right) \cdot \left(-\beta_1 a \prod_{i=2}^q x_i - \beta_1\right).$$
        For every $i \in \{1,\dots,q\}$, we again set $y_i=\log x_i$.
        Then $P$ is equivalent to $$P'(y_2,\dots,y_q)= \left(\log(-\beta_1a)+ \sum_{i=2}^q y_i - \log(A)\right) \cdot \left(\log(-\beta_1a)+ \sum_{i=2}^q y_i - \log(\beta_1)\right).$$ Recall that $q \geq 3$.
        Then, whatever $q$ is, in the expansion of $P'$ the monomial $y_2y_3$ has strictly positive coefficient. The Combinatorial Nullstellensatz then implies that we can assign values to $y_2,\dots,y_q$ so that $P'$ does not vanish, assuming we have at least two values to choose from for each of $y_2$ and $y_3$, and at least one value to choose from for each of $y_4, \dots, y_q$. From this, we deduce that we can assign values to $vw_2,\dots,vw_q$ from $\{\alpha_2,\beta_2\}, \{\alpha_3,\beta_3\}, \{\alpha_4\}, \{\alpha_5\}, \dots, \{\alpha_q\}$, respectively, so that $u$ is in conflict with none of $w_1$ and $t$. Recall that the resulting sign of $\pi_\ell(u)$ is negative, while the sign of all vertices $w_i$ with $i \in \{2,\dots,q\}$ is positive. Thus, these vertices also cannot be in conflict.
        
        \item Suppose that $s=+$ and $W^- \neq \emptyset$. Assume w.l.o.g. that $w_1 \in W^-$. Recall that, as long as $u$ and $w_1$  get products with different signs by a labelling, they cannot be in conflict. Thus, we here get our conclusion through the Combinatorial Nullstellensatz, by not modelling the possible conflict between $u$ and $w_1$. The precise details are as follows. For every $i \in \{1,\dots,q\}$, let $x_i$ be a variable associated to $uw_i$. We consider the polynomial $$P(x_1,\dots,x_q)= \left(a \prod_{i=1}^q x_i - A \right) \cdot \prod_{i=2}^q \left(a \prod_{j=1}^q x_j-x_i \right).$$
         For every $i \in \{1,\dots,q\}$, we set $y_i=\log x_i$.
         Then $P$ is equivalent to $$P'(y_1,\dots,y_q)=\left(\log(a)+\sum_{i=1}^q y_i - \log(A)\right) \cdot \prod_{i=2}^q \left(\log(a) + \sum_{j=1}^q y_j - y_i\right).$$
         In the expansion of $P'$, the monomial $y_1 \dots y_q$ has strictly positive coefficient, and, thus, by the Combinatorial Nullstellensatz, we can assign labels from $\{\alpha_1,\beta_1\},\dots,\{\alpha_q,\beta_q\}$ to $uw_1,\dots,uw_q$, respectively, resulting in a labelling $\ell$ of $S$ where $u$ gets in conflict with none of $w_2,\dots,w_q,t$. Proceeding that way, recall that the sign of $\pi_\ell(u)$ is positive, while that of $\pi_\ell(w_1)$ is negative. Then, also $u$ and $w_1$ cannot be in conflict, and $\ell$ is p-proper.
    \end{itemize}
\end{itemize}

    To conclude the proof, let us point out that the cases where $s=-$ can be treated in a symmetric way, by considering whether $W^+$ is empty or not.
\end{proofclaim}

We are now ready to conclude the proof of Theorem~\ref{theorem:trees}.
Recall that we have obtained a labelling $\ell'$ of $T'=T-\set{v_1,\dots,v_p,v'_1,\dots,v'_p,w_1,\dots,w_q}$ by induction, and that we are in the case where $u$ is adjacent to $q \geq 2$ leaves (and, possibly, $p$ $v_i$'s and one parent $t$). We start extending $\ell'$ to $T$ by considering every edge $uv_i$ (if such edges exist) and assigning to it a label from $L(uv_i)$ different from~$1$. This is clearly possible, since $|L(uv_i)|=3$. We now apply Claim~\ref{claim:star-cn} to the $uw_i$'s to get all edges incident to $u$ labelled, in such a way that $u$ is not in conflict with any of $t$ (if it exists; if it does not, then note that the claim applies in a very close way) and the $w_i$'s. The main difference here, is that, though we do not have to care about possible conflict between $u$ and the $v_i$'s for now, the claim must be employed with taking into consideration the contribution of the $uv_i$'s to the product of $u$. Lastly, it remains to label every $v_iv_i'$ with a label from $L(v_iv_i')$ so that $v_i$ and $u$ do not get into conflict, which is possible since we have three possible labels. Recall in particular that $v_i$ and $v_i'$ cannot be in conflict since $\ell(uv_i) \neq 1$.
Eventually, $\ell$ is p-proper, as desired.
\end{proof}


\subsubsection*{Planar graphs with large girth}

Recall that a \textit{planar graph} is a graph that can be embedded in the plane so that no two edges cross, and that, for any graph $G$, the \textit{girth} $g(G)$ of $G$ refers to the length of its shortest cycles. In case $G$ has no cycle,  we set $g(G)=\infty$.

Planar graphs with large enough girth are known to be $2$-degenerate and to have low maximum average degree. Thus, the third and fifth items of Corollary~\ref{corollary:existing-bounds} establish~$5$ as a constant upper bound on $\chps(G)$ when $G$ is indeed a nice planar graph with large girth. In what follows, we improve this upper bound down to~$4$ when $g(G) \geq 16$, getting closer to the List Multiplicative 1-2-3 Conjecture for this class of graphs. Our proof involves arguments that are reminiscent to those used to prove Theorem~\ref{theorem:trees}, combined together with the following structural result:

\begin{theorem}[e.g. Ne\v{s}et\v{r}il, Raspaud, Sopena~\cite{NRS97}]\label{theorem:threads}
If $G$ is a planar graph with girth $g(G) \geq 5\ell +1$ for some $\ell \geq 1$, then either:
\begin{itemize}
    \item $\delta(G)=1$, or
    \item $G$ contains an \textit{$\ell$-thread}, i.e., a path $uv_1\dots v_\ell w$ where $d(u),d(w) \geq 2$, and $d(v_i)=2$ for every $i \in \{1,\dots,\ell\}$.
\end{itemize}
\end{theorem}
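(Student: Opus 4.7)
The plan is to proceed by contradiction: suppose $G$ is a planar graph with $g(G) \geq 5\ell+1$ such that $\delta(G) \geq 2$ and $G$ contains no $\ell$-thread. I want to extract from these assumptions a simple planar graph with minimum degree at least $3$ and girth at least $6$, which is impossible by Euler's formula.

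First, I build an auxiliary graph $G'$ by suppressing every degree-$2$ vertex of $G$: each maximal path whose internal vertices all have degree $2$ is collapsed to a single edge joining its two endpoints, which necessarily have degree at least $3$ (since $\delta(G) \geq 2$, no such path ends at a leaf). The vertex set of $G'$ is thus the set of degree-$\geq 3$ vertices of $G$, and $G'$ inherits planarity from $G$. The hypothesis that $G$ has no $\ell$-thread means every maximal degree-$2$ path in $G$ has at most $\ell-1$ internal vertices, so every edge of $G'$ corresponds to a path of length at most $\ell$ in $G$. By construction, $\delta(G') \geq 3$.

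Next, I verify that $G'$ is simple and has girth at least $6$. Any cycle of length $k$ in $G'$ lifts to a cycle in $G$ of length at most $k\ell$, hence $k\ell \geq g(G) \geq 5\ell + 1$, giving $k \geq 6$. Specialising to $k=1$ (a loop) or $k=2$ (a pair of parallel edges) would force $\ell \geq 5\ell+1$ or $2\ell \geq 5\ell+1$, both impossible for $\ell \geq 1$; hence $G'$ has no loops and no multi-edges, and its shortest cycle has length at least $6$.

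Finally, I compare two inequalities on $G'$. Euler's formula for a simple planar graph with girth at least $6$ yields $|E(G')| \leq \tfrac{3}{2}(|V(G')| - 2)$, whereas $\delta(G') \geq 3$ gives $|E(G')| \geq \tfrac{3}{2}|V(G')|$. These are incompatible unless $V(G') = \emptyset$, in which case $G$ has no vertex of degree at least $3$; combined with $\delta(G) \geq 2$, this forces $G$ to be a disjoint union of cycles, each of length at least $5\ell + 1 \geq \ell + 2$, and any such cycle trivially contains an $\ell$-thread, a contradiction. The only real subtlety is getting the girth threshold exactly right: the bound $5\ell+1$ is precisely what is needed to upgrade the girth of $G$ to girth at least $6$ in $G'$, which in turn is exactly the girth required to play the planar Euler bound against minimum degree $3$.
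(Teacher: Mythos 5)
The paper does not prove this statement at all: it is imported as a known structural result and attributed to Ne\v{s}et\v{r}il, Raspaud and Sopena~\cite{NRS97}, so there is no in-paper argument to compare yours against. Your proof is a correct, self-contained derivation via the standard route (suppress degree-$2$ vertices, check the resulting multigraph is simple with minimum degree~$3$ and girth at least~$6$, then play Euler's bound $|E|\leq\frac{3}{2}(|V|-2)$ against $|E|\geq\frac{3}{2}|V|$), and the threshold $5\ell+1$ is indeed exactly what makes the girth of the suppressed graph reach~$6$; this is in the spirit of the discharging-type arguments in~\cite{NRS97} but more elementary. One small imprecision worth fixing: for a \emph{closed} maximal degree-$2$ path (a cycle through a single branch vertex $u$, which would become a loop in $G'$), the absence of an $\ell$-thread only bounds the number of internal degree-$2$ vertices by $\ell$, not $\ell-1$, because with exactly $\ell$ such vertices the only candidate thread would need $u$ as both endpoints and hence is not a path. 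The corresponding cycle in $G$ then has length at most $\ell+1$ rather than $\ell$, which is still far below $5\ell+1$, so the conclusion that $G'$ has no loops survives; but as written your uniform claim ``at most $\ell-1$ internal vertices'' is slightly too strong. The endgame case $V(G')=\emptyset$ (disjoint union of cycles) is handled correctly, since a cycle of length at least $\ell+2$ always contains an $\ell$-thread.
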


We are now ready to prove our result.

\begin{theorem}\label{theorem:planar-girth16}
If $G$ is a nice planar graph with girth $g(G) \geq 16$, then $\chps(G) \leq 4$.
\end{theorem}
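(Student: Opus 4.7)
The argument will proceed by induction on $|V(G)|$, with paths, cycles, and trees serving as base cases (handled by Theorem~\ref{theorem:cycles} and Theorem~\ref{theorem:trees}). Let $L$ be an arbitrary $4$-list assignment to $E(G)$. Since $g(G)\ge 16 = 5\cdot 3 + 1$, applying Theorem~\ref{theorem:threads} with $\ell=3$ yields the dichotomy: either $\delta(G)=1$, or $G$ contains a $3$-thread $u v_1 v_2 v_3 w$ with $d_G(u), d_G(w)\ge 2$ and $d_G(v_1)=d_G(v_2)=d_G(v_3)=2$.

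In the pendant case $\delta(G)=1$, the plan is to mimic the pending-path reduction from the proof of Theorem~\ref{theorem:trees}: identify a pending path starting at a leaf, delete its initial one or two vertices, apply the induction hypothesis to the remaining planar graph of girth $\ge 16$ (which, after a short local case analysis, is nice or can be made so by further reduction), and extend the resulting p-proper labelling back to $G$. Lists of size~$4$ (rather than~$3$ as in the tree argument) provide ample room at each reattachment step to avoid the forbidden label~$1$ and the at most two additional ``collision'' values dictated by the already-labelled neighbourhood.

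In the thread case, assuming $\delta(G)\ge 2$ (otherwise we are in the previous case), I would set $G'=G-\{v_1,v_2,v_3\}$. The girth hypothesis rules out short alternative $u$--$w$ connections, so $u\neq w$ and no new $K_2$-component arises; thus $G'$ is a nice planar graph of girth $\ge 16$, and induction yields a p-proper $L'$-labelling $\ell'$. Denoting $P_u=\pi_{\ell'}(u)$ and $P_w=\pi_{\ell'}(w)$, the task reduces to assigning labels to the four thread edges $e_1=uv_1,\ e_2=v_1v_2,\ e_3=v_2v_3,\ e_4=v_3w$ so as to enforce the internal constraints $\ell(e_2)\neq P_u$, $\ell(e_3)\neq P_w$, $\ell(e_1)\neq\ell(e_3)$, $\ell(e_2)\neq\ell(e_4)$, together with the boundary constraints $P_u\cdot\ell(e_1)\neq\pi_{\ell'}(x)$ for every $x\in N_{G'}(u)$ and $P_w\cdot\ell(e_4)\neq\pi_{\ell'}(y)$ for every $y\in N_{G'}(w)$. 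When $\max\{d_G(u),d_G(w)\}\le 4$, a sequential choice (fix $\ell(e_1)$, then $\ell(e_4)$, then $\ell(e_2)$, then $\ell(e_3)$) succeeds, since each list of size~$4$ comfortably accommodates at most three forbidden values per step.

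The main obstacle I foresee is the high-degree subcase $\max\{d_G(u),d_G(w)\}\ge 5$, in which the naive sequential choice of $\ell(e_1)$ (or $\ell(e_4)$) may encounter up to $d_G(u)-1\ge 4$ forbidden values and fail. To circumvent this, I plan to apply the Combinatorial Nullstellensatz in the spirit of Theorem~\ref{theorem:better-bound-triangle-free}, enlarging the set of ``free'' edges beyond the thread by also pulling out from $\ell'$ one well-chosen edge incident to the offending high-degree endpoint, so that the resulting polynomial has enough variables to admit a monomial of positive coefficient with every individual exponent strictly below~$4$. Verifying that such a monomial exists, and that the further reduced graph remains nice and amenable to induction, is where the bulk of the technical work will lie.
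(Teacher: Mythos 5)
Your overall architecture (induction, Theorem~\ref{theorem:threads} with $\ell=3$, a pendant case and a thread case) matches the paper's, but both cases have genuine gaps, and in each the missing idea is precisely the one the paper uses. In the thread case you delete all three internal vertices and must then relabel $uv_1$ and $v_3w$; as you note, $\ell(uv_1)$ must avoid up to $d(u)-1$ values coming from the neighbours of $u$, which a list of size~$4$ cannot do. Your proposed repair --- freeing one further edge $uz$ and invoking the Combinatorial Nullstellensatz --- cannot work: writing $x_1,x_2$ for the variables of $uv_1$ and $uz$, every factor $\pi(u)-\pi(x)$ over the $d(u)$ neighbours $x$ of $u$ has leading term $Cx_1x_2$, so every maximum-degree monomial (the only kind the Nullstellensatz accepts) carries $x_1$ with exponent at least $d(u)\ge 5$, far above the list size; freeing more edges only spreads the problem to their endpoints. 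The paper sidesteps this entirely by deleting only the \emph{middle} vertex $v_2$ of the thread: $u$ and $w$ keep all their incident edges, their products are already conflict-free by minimality, and only $v_1v_2$ and $v_2v_3$ --- incident solely to degree-$2$ vertices --- remain to be labelled, each avoiding at most two or three values.

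The pendant case, which you dispatch in a sentence, is where the bulk of the paper's work actually lies. If a high-degree vertex $u$ carries a pendant leaf $w$, deleting $w$ and extending back changes $\pi(u)$, so the single label of $uw$ must dodge a conflict between $u$ and each of its $d(u)-1$ other neighbours (and $u$ may even have product $1$ in $G-w$, in which case no label works); lists of size~$4$ do not suffice once $d(u)\ge 5$, and no one-leaf-at-a-time or one-short-path-at-a-time reduction repairs this. The paper instead strips all degree-$1$ vertices to reach $G^-$ with $\delta(G^-)\ge 2$ and girth at least~$16$, takes a $3$-thread of $G^-$, argues that the pending trees rooted at its internal vertices may be assumed to be subdivided stars with legs of length $1$ or~$2$, and labels all legs of such a star simultaneously via a dedicated claim (Claim~\ref{claim:star-planar}) proved by the Combinatorial Nullstellensatz plus a sign-based analysis for lists of the form $\set{\alpha,-\alpha,\beta,-\beta}$. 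Some such simultaneous star argument is unavoidable; without it your induction does not close.
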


\begin{proof}
Assume the claim is wrong, and let $G$ be a minimal counterexample to the claim. We may assume that $G$ is connected, and, due to Theorems~\ref{theorem:cycles} and~\ref{theorem:trees}, that $\Delta(G) \geq 3$ and that $G$ is not a tree. Let $L$ be a $4$-list assignment to the edges of $G$. We prove the result by contradicting the existence of $G$, i.e., by showing that $G$ admits p-proper $L$-labellings, whatever $L$ is.

If $\delta(G) \geq 2$, then, by Theorem~\ref{theorem:threads}, we can find a $3$-thread $uv_1v_2v_3w$ in $G$. In that case, we consider $G'=G-v_2$. Note that $G'$ may consist in up to two connected components, each of which has at least two edges (since $d(u),d(w) \geq 2$, by the assumption on $\delta(G)$) and girth at least~$16$ (in case there is only one connected component, $G'$ might be a tree; in that case, $g(G')=\infty$, and the girth condition remains true). So $G'$ is nice and planar, and, by minimality of $G$, there is a p-proper $L'$-labelling $\ell'$ of $G'$, where $L'$ denotes the restriction of $L$ to the edges of $G'$. To obtain a contradiction, it now suffices to extend $\ell'$ to a p-proper $L$-labelling of $G$, and, for this, we just have to assign labels from $L(v_1v_2)$ and $L(v_2v_3)$ to $v_1v_2$ and $v_2v_3$, respectively, so that no conflict arises. This can clearly be done since $|L(v_1v_2)|=|L(v_2v_3)| = 4$, by first assigning to $v_1v_2$ a label  different from $\ell'(v_3w)$ for which $v_1$ and $u$ get different partial products, and then assigning to $v_2v_3$ a label so that $v_1$ and $v_2$ are not in conflict, and similarly for $v_3$ and $w$.

We may thus assume that $\delta(G)=1$. Since $G$ is not a tree, this means that, by repeatedly removing vertices of degree~$1$ while there are some, we end up with a planar connected graph $G^-$ such that $\delta(G^-) \geq 2$ and $g(G^-) \geq 16$. More precisely, for every $v \in V(G) \cap V(G^-)$, we can denote by $T_v$ the pending tree rooted at $v$ in $G$, which, if $d_G(v)=d_{G^-}(v)$, is reduced to the single vertex $v$. Then $G^-$ is obtained from $G$ by contracting every $T_v$ to $v$. For every $v \in V(G) \cap V(G^-)$, we deal, in $G$, with $T_v$ through the terminology introduced in the proof of Theorem~\ref{theorem:trees} (in particular, the notions of parent, son, descendant and branching vertex have the exact same meaning).

Because $g(G^-) \geq 16$, then, by Theorem~\ref{theorem:threads}, we deduce that $G^-$ has a $3$-thread $P=uv_1v_2v_3w$. Note that $P$ also exists back in $G$, the difference being that $v_1,v_2,v_3$ might each be the root of a pending tree (denoted $T_{v_1},T_{v_2},T_{v_3}$, respectively, following our terminology) that might have edges. In case we have $V(T_{v_i})=\{v_i\}$ for every $i \in \{1,2,3\}$, then note that $P$ is actually a $3$-thread in $G$, in which case a contradiction can be obtained in the similar way as in the previous case $\delta(G) \geq 2$. Thus, in what follows, we assume that some of $T_{v_1},T_{v_2},T_{v_3}$ are not reduced to a single vertex.

By arguments similar to some used in the proof of Theorem~\ref{theorem:trees}, we may assume that none of $T_{v_1},T_{v_2},T_{v_3}$ has 1) a non-root branching vertex, or 2) a pending path of length at least~$3$ (remind, in particular, that in the current context there is even more room for labelling extensions, due to $L$ being a $4$-list assignment). This means that each $T_{v_i}$ is a subdivided star with center $v_i$, where the pending paths attached to $v_i$ (if any) have length~$1$ or $2$.

We start by handling a very particular case, which is when every $T_{v_i}$ has only one edge $v_iv_i'$, i.e., is a star with a single edge $v_iv_i'$. In this case, we consider $G'=G-v_2$. A p-proper $L'$-labelling of $G'$ (where, again, $L'$ denotes the restriction of $L$ to $G'$), which exists by minimality of $G$, can then be extended to a p-proper $L$-labelling of $G$, a contradiction, by first labelling $v_1v_2$ with a label from $L(v_1v_2)$ so that no conflict between $v_1$ and its two neighbours different from $v_2$ arises, then labelling $v_2v_3$ with a label from $L(v_2v_3)$ so that 1) no conflict between $v_3$ and its two neighbours different from $v_2$ arises, and 2) $v_2$ gets partial product different from~$1$; and lastly labelling the edge $v_2v_2'$ of $T_{v_2}$ with a label from $L(v_2v_2')$ so that no conflict between $v_2$ and its two neighbours different from $v_2'$ arises. Recall, in particular, that $v_2$ and $v_2'$ cannot be in conflict due to how $v_2v_3$ was labelled.
Note also that lists of four labels are indeed sufficient to achieve this whole process.

In the more general case, let us consider the graph $G'=G-(V(T_{v_1}) \setminus \{v_1\})-V(T_{v_2})-(V(T_{v_3})\setminus\{v_3\})$ (obtained by removing the non-root vertices of $T_{v_1}$ and $T_{v_3}$, and the whole of $T_{v_2}$). By arguments used earlier in the case where $\delta(G) \geq 2$, there is a p-proper $L'$-labelling $\ell'$ of $G'$, where $L'$ denotes the restriction of $L$ to the edges of $G'$. Our goal, to get a final contradiction, is to extend $\ell'$ in a p-proper way to the edges $v_1v_2$, $v_2v_3$ and those in $T_{v_1},T_{v_2},T_{v_3}$, assigning labels from their respective lists, so that a p-proper $L$-labelling of $G$ results.

We start by assigning labels from $L(v_1v_2)$ and $L(v_2v_3)$ to $v_1v_2$ and $v_2v_3$, respectively, in such a way that, for the resulting partial products of $v_1,v_2,v_3$, 1) $v_2$ is in conflict with none of $v_1$ and $v_3$, 2) $v_1$ is not in conflict with $u$, 3) $v_3$ is not in conflict with $w$, and 4) none $v_i$ of $v_1,v_2,v_3$ for which $T_{v_i}$ contains only one edge, gets product~$1$ as a result. This is possible to achieve since $|L(v_1v_2)|=|L(v_2v_3)|=4$. More precisely, this can be achieved by labelling $v_1v_2$ first and $v_2v_3$ second if $T_{v_1}$ has only one edge, or by labelling $v_2v_3$ first and $v_1v_2$ second otherwise. Recall, in particular, that we have treated separately the case where all of $T_{v_1},T_{v_2},T_{v_3}$ have only one edge, so we are not in that case; the fourth condition must thus be fulfilled for at most two of the $v_i$'s.

It now remains to label the edges from the $T_{v_i}$'s. We achieve this by considering $T_{v_1}$, $T_{v_2}$ and $T_{v_3}$ in turn, so that, once every $T_{v_i}$ has been treated, no vertex in $V(T_{v_1}) \cup \dots \cup V(T_{v_i})$ is involved in conflicts, and none of the vertices in $V(T_{v_{i+1}}) \cup \dots \cup V(T_{v_3})$ had its product altered. This way, the desired p-proper $L$-labelling of $G$ will result once $T_{v_3}$ has been treated. In what follows, we focus on $T_{v_1}$, but the arguments apply similarly for $T_{v_2}$ and $T_{v_3}$.

Recall that $T_{v_1}$ consists of some (possibly none) pending paths of length~$1$ or~$2$ attached to $v_1$. Let us assume that $p \geq 0$ of these paths have length~$2$, while $q \geq 0$ of them have length~$1$. We denote by $b_1,\dots,b_p$ the sons of $v_1$ that belong to the pending paths of length~$2$, while we denote by $c_1,\dots,c_q$ those from the pending paths of length~$1$. Finally, for every $i \in \{1,\dots,p\}$, we denote by $b_i'$ the son of $b_i$ in $T_{v_1}$. By how $v_1v_2$ was labelled earlier, note that we already have the desired conclusion around $v_1$ if $p=q=0$. We thus focus on the cases where $p+q>0$.

\begin{itemize}
    \item The cases where $q \in \{0,1\}$ can be treated quite similarly as the cases $q=0$ and $q=1$ in the proof of Theorem~\ref{theorem:trees}. Namely, we first label the edges $v_1b_1,\dots,v_1b_{p-1}$ (if such exist) with labels different from~$1$ from their respective lists. If $q=0$, then we label $v_1b_p$ with a label different from~$1$ from its list, with making sure that the resulting product of $v_1$ is different from that of $u$ and $v_2$. Otherwise, if $q=1$, then we label $v_1b_p$ with a label different from~$1$ from its list, with making sure that the resulting partial product of $v_1$ does not get equal to~$1$ (if $p=0$, then recall that this property is already verified at $v_1$, due to how $v_1v_2$ and $v_2v_3$ have been labelled). Still in the case where $q=1$, this guarantees that $v_1$ and $c_1$ cannot get in conflict no matter how $v_1c_1$ is labelled; thus, we can label $v_1c_1$ with a label from its list so that $v_1$ does not in conflict with $u$ and $v_2$. Note that lists of size~$4$ are sufficient to achieve these conditions in all cases. We lastly label every edge $b_ib_i'$ (if any) with a label from its list, with making sure that $b_i$ does not get in conflict with $v_1$. Because $v_1b_i$ was assigned a label different from~$1$, recall that $b_i$ and $b_i'$ cannot be in conflict.
    
    \item The cases where $q=2$ can be treated quite similarly. Start by labelling every edge $v_1b_i$ (if there are some) with a label different from~$1$ from its list. Then, label $v_1c_1$ with a label from its list, so that the resulting partial product of $v_1$ does not get equal to~$1$. Last, label $v_1c_2$ with a label from its list, so that $v_1$ gets in conflict with none of $u$, $v_2$ and $c_1$. Note that this is possible, since we do not have to care about a possible conflict between $v_1$ and $c_2$, and $|L(v_1c_2)|=4$. To conclude, we can eventually label the $b_ib_i'$'s just as in the previous case.
\end{itemize}
    
The general case is when $q \geq 3$. We need a generalisation of Claim~\ref{claim:star-cn} to the current context.

\begin{claim}\label{claim:star-planar}
Let $S$ be a star with center $u$ and $q+2 \geq 5$ leaves $t,t',w_1,\dots,w_q$. Assume we have a partial labelling $\ell'$ of $S$ where $ut$ and $ut'$ are the only edges being assigned a label, $a$ and $a'$, respectively, and that $t$ and $t'$ have (virtual) product $\pi_{\ell'}(t)=A$ and $\pi_{\ell'}(t')=A'$. If $L$ is a $4$-list assignment to the $uw_i$'s, then, for every $i \in \{1,\dots,q\}$, we can assign a label from $L(uw_i)$ to $uw_i$, so that $\ell'$ is extended to a labelling $\ell$ of $S$ verifying $\pi_\ell(u) \not \in \{A, A', \pi_\ell(w_1),\dots,\pi_\ell(w_q)\}$.
\end{claim}

\begin{proofclaim}
Note that each $L(uw_i)$ contains two, three or four values with pairwise distinct absolute values.
We consider several cases based on that fact.

\begin{itemize}
    \item Assume, w.l.o.g., that the four values in $L(uw_1)$ have pairwise distinct absolute values. To each edge $uw_i$, we associate a variable $x_i$, and we consider the polynomial
    $$P(x_1,\dots,x_q)= \left(aa' \prod_{i=1}^q x_i - A \right) \cdot \left(aa' \prod_{i=1}^q x_i - A'\right) \cdot \prod_{i=1}^q \left(aa' \prod_{j=1}^q x_j-x_i \right).$$
    For every $i \in \{1,\dots,q\}$, we set $y_i=\log x_i$.
    Then $P$ gets equivalent to
    \begin{equation*}
    \begin{split}
        P'(y_1,\dots,y_q)=&\left(\log(aa')+\sum_{i=1}^q y_i - \log(A)\right) \cdot \left(\log(aa')+\sum_{i=1}^q y_i - \log(A')\right) \\ & \cdot 
        \prod_{i=1}^q \left(\log(aa') + \sum_{j=1}^q y_j - y_i\right).
    \end{split}
    \end{equation*}
    In the expansion of $P'$, the monomial $y_1^3 y_2 \dots y_q$ has strictly positive coefficient. 
    Thus, by the Combinatorial Nullstellensatz, we can assign values to the $y_i$'s so that $P'$ does not vanish, as long as we are given a set of at least four possible distinct values as $y_1$, and a set of at least two possible distinct values as each of $y_2,\dots,y_q$. Regarding $P$, this implies we can assign values to the $x_i$'s so that $P$ does not vanish, assuming we have a set of a least four possible values with pairwise distinct absolute values as $x_1$, and a set of at least two possible values with distinct absolute values as each of $x_1,\dots,x_q$. This is met in the current case, since $L(uw_1)$ is assumed to have four values with pairwise distinct absolute values, and $|L(uw_i)|=4$ for every $i \in \{2,\dots,q\}$. Thus, $\ell'$ can be extended to $\ell$ as desired.
    
    \item Assume now that, w.l.o.g., both $L(uw_1)$ and $L(uw_2)$ include three values with pairwise distinct absolute values. Then the same conclusion as in the previous case can be reached from considering the monomial $y_1^2y_2^2y_3\dots y_q$ in the expansion of $P'$.
    
    \item We can thus assume that none of the two previous cases applies, i.e., that, w.l.o.g., $L(uw_1)$ includes two or three values with pairwise distinct absolute values, while $L(uw_2),\dots,L(uw_q)$ include each exactly two values with pairwise distinct absolute values. In other words, we have $L(uw_i)=\{\alpha_i,-\alpha_i,\beta_i,-\beta_i\}$ for every $i \in \{2,\dots,q\}$, for some distinct $\alpha_i,\beta_i$, while $L(uw_1)=\{\alpha_1,-\alpha_1,\beta_1,-\beta_1\}$ or $L(uw_1)=\{\alpha_1,-\alpha_1,\beta_1,\gamma_1\}$, for some distinct $\alpha_1,\beta_1,\gamma_1$. To conclude the proof, we consider a few more cases:
    
    \begin{itemize}
        \item Assume first that $A$ and $A'$ have the same sign $s \in \{-,+\}$. For every $i \in \{1,\dots,q-2\}$, let us assign to $uw_i$ a label with sign $s$ from its list. Then:
        
        \begin{itemize}
            
            
            
            
            \item If $s$ and the sign of the partial product of $u$ are the same, then we assign to $uw_{q-1}$ a label with sign $s$ from its list, chosen so that the partial product of $u$ gets different from~$1$. Note that this is possible, since $L(uw_{q-1})$ contains two  values with sign $s$. This guarantees that $u$ and $w_q$ cannot be in conflict, whatever the label of $uw_q$ is. We then assign to $uw_{q}$ a label with sign $-s$ from its list, so that all edges are labelled and no conflict remains. In particular, $u$ gets product with sign $-s$, while only $w_q$ has this property.
            
            \item Otherwise, i.e., if $s$ and the sign of the partial product of $u$ are different, then we assign to $uw_{q-1}$ and $uw_q$ a label with sign $s$ from their lists. As a result, no conflict remains, since $u$ is the only vertex with product being of sign $-s$.
        \end{itemize}
        
        \item Now assume that $A$ and $A'$ have different signs, say $A$ is positive while $A'$ is negative. We here start by assigning, for every $i \in \{1,\dots,q-2\}$, a positive label to $uw_i$ from its list $L(uw_i)$. Now:
        
        \begin{itemize}
            \item If currently $u$ has negative product, then we assign to $uw_{q-1}$ and $uw_q$ a positive label from their respective lists, with making sure that the product of $u$ gets different from $A'$. This is possible since $L(uw_{q-1})$ and $L(uw_q)$ have two positive values each. Since only $u$ and $t'$ have negative product, no conflict remains.
            
            \item Otherwise, i.e., $u$ currently has positive product, then we first assign a positive label to $uw_{q-1}$ from its list, chosen so that the current product of $u$ does not get equal to~$1$. This is possible, since $L(uw_{q-1})$ contains two positive values. This guarantees that $u$ and $w_q$ cannot get in conflict. We then assign to $uw_q$ a negative label from $L(uw_q)$, chosen so that $u$ gets product different from~$A'$. This is possible since $L(uw_q)$ contains two negative values. Since only $A'$ and the products of $u$ and $w_q$ are negative, no conflict remains.
        \end{itemize}
    \end{itemize}
\end{itemize}

In all cases, we end up with the desired labelling $\ell$, which concludes the proof.
\end{proofclaim}

We can now conclude the case $q \geq 3$ of the proof of Theorem~\ref{theorem:planar-girth16}, thus proving the whole statement. We start by labelling every edge $v_1b_i$ (if any) with any label different from~$1$ from its list $L(v_1b_i)$. We now apply Claim~\ref{claim:star-planar} to get all $v_1c_i$'s labelled with labels from their lists, so that $v_1$ is not in conflict with any of $u$, $v_2$ and the $c_i$'s. This can be done by applying Claim~\ref{claim:star-planar} with $v_1$, $u$ and $v_2$ playing the role of $u$, $t$ and $t'$, respectively, $\pi_{\ell'}(u)$ and $\pi_{\ell'}(v_2)$ playing the role of $A$ and $A'$, respectively, $\ell'(uv_1)\prod_{i=1}^p \ell(ub_i)$ and $\ell'(v_1v_2)$ playing the role of $a$ and $a'$, respectively, and the $c_i$'s playing the role of the $w_i$'s. It remains to label the $b_ib_i'$'s (if any), and, for each such edge $b_ib_i'$, it suffices to assign a label from its list so that $b_i$ and $v_1$ do not get in conflict. Recall that we do not have to mind about a possible conflict between $b_i$ and $b_i'$, since $\ell(v_1b_i) \neq 1$.
\end{proof}

\subsubsection{Subcubic graphs}

We now consider subcubic graphs, i.e., graphs with maximum degree~$3$. Note that, at this point, the best upper bound we have on $\chps$ for these graphs is~$5$, obtained from Theorem~\ref{theorem:upper-bound-cn}. We get one step closer to the List Multiplicative 1-2-3 Conjecture for this class of graphs, by lowering the upper bound down to~$4$ in the next result.

\begin{theorem}\label{theorem:subcubic}
If $G$ is a nice subcubic graph, then $\chps(G) \leq 4$.
\end{theorem}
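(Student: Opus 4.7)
The plan is to argue by a minimum counterexample on $|V(G)|+|E(G)|$. Let $G$ be such a counterexample and $L$ a $4$-list assignment (with no zero entries) to the edges of $G$ admitting no p-proper $L$-labelling. By Theorems~\ref{theorem:cycles} and~\ref{theorem:trees}, we may assume $G$ is connected, contains a cycle, and $\Delta(G)=3$. The overall strategy mirrors the structure of the proofs of Theorems~\ref{theorem:trees} and~\ref{theorem:planar-girth16}: establish a small catalogue of reducible configurations (which cannot appear in $G$ by minimality), then use structural properties of subcubic graphs to show that at least one configuration must exist, deriving a contradiction.

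First, I would dispose of low-degree structure. If $G$ contains a pendant vertex $v$ with neighbour $u$, then consider $G'=G-v$. Provided $G'$ is nice, minimality gives a p-proper $L'$-labelling of $G'$, which we extend by choosing $\ell(uv)\in L(uv)$ so that $u$ avoids conflicts with its (at most two) remaining neighbours, while ensuring $\ell(uv)\neq 1$ so that $u$ and $v$ are not in conflict. With $|L(uv)|=4$, this leaves at least one admissible label. Degenerate situations where $G'$ fails to be nice (e.g., $G$ reduces to a path) are covered by Theorems~\ref{theorem:cycles} and~\ref{theorem:trees}. Similarly, if $G$ contains a $2$-thread $uv_1v_2w$, I would remove $v_1$, inductively label $G-v_1$, then label $uv_1$ and $v_1v_2$ in turn; with lists of size~$4$ the extension is direct, since each of these edges has at most three forbidden labels from conflict constraints.

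Second, still in the non-cubic case, I would handle degree-$2$ vertices whose neighbours both have degree~$3$, possibly by removing the degree-$2$ vertex and extending through a two-variable polynomial handled by the Combinatorial Nullstellensatz. Having discharged these, the remaining case is that $G$ is cubic, and here I would pick a vertex $u$ with $N(u)=\{u_1,u_2,u_3\}$, consider $G'=G-u$, obtain a p-proper $L'$-labelling of $G'$ by induction (after verifying niceness; otherwise $G$ is isomorphic to $K_4$ or similar small graphs to be checked by hand), and try to extend it by labelling the three edges $uu_1,uu_2,uu_3$. Since $u_i$'s degrees in $G'$ are only $2$, the ``safe'' label sets $S_i\subseteq L(uu_i)$ avoiding conflicts between $u_i$ and its other neighbours have size at least $4-2=2$, and we can likely arrange that each $S_i$ contains two values of distinct absolute value. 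Then a star-type CN claim, in the spirit of Claim~\ref{claim:star-cn}, applied to the product polynomial $\prod_{i=1}^{3}(x_1x_2x_3-x_iz_i)$ with the logarithmic substitution $y_i=\log|x_i|$, would supply the needed extension, since a monomial such as $y_1y_2y_3$ (or a suitably chosen one of degree $3$) has nonzero coefficient.

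The main obstacle is the cubic case: when $N(u)$ is not independent (so the ``stable set'' hypothesis of Theorem~\ref{theorem:better-bound-triangle-free} fails) or when the safe sets $S_i$ are forced into uncooperative sign/absolute-value patterns, the direct CN extension breaks down. Handling this will require a case analysis on the sign structure of the $S_i$'s, exactly as in Claim~\ref{claim:star-planar}, possibly combined with swapping which vertex of a triangle or a short cycle plays the role of $u$ so as to guarantee a favourable configuration. Verifying nonvanishing of the critical monomial in each sub-case, and checking that any small cubic graphs escaping all reductions (a finite list, essentially $K_4$, $K_{3,3}$, and a few more) satisfy $\chps\leq 4$ directly, is where most of the technical effort will concentrate.
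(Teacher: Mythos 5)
Your skeleton (minimal counterexample, reduce to the cubic case, then extend via the Combinatorial Nullstellensatz after a logarithmic substitution) matches the paper's in spirit, but two points are genuinely problematic. First, your pendant-vertex reduction is incorrect as stated: if $v$ is the leaf and $u$ its neighbour, then $\pi_\ell(v)=\ell(uv)$ while $\pi_\ell(u)=\ell(uv)\cdot z$, where $z$ is the product of the labels on the other edges at $u$; the conflict between $u$ and $v$ occurs exactly when $z=1$, so no choice of $\ell(uv)$ (in particular not the choice $\ell(uv)\neq 1$) can prevent it once the rest of the labelling is fixed. This is why the paper deletes \emph{both} endpoints of the pendant edge and arranges, while relabelling the reinstated edges, that the partial product at $u$ coming from its other edges is different from~$1$.

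Second, and more seriously, the cubic case --- which is the bulk of the paper's proof --- is not actually carried out in your proposal. Deleting a single cubic vertex $u$ leaves each neighbour $u_i$ with a ``safe'' set $S_i$ of size only $4-2=2$, and nothing prevents $S_i=\{a,-a\}$, in which case the substitution $y_i=\log|x_i|$ collapses $S_i$ to a single value and the Nullstellensatz with monomial $y_1y_2y_3$ cannot be invoked; your own text concedes this, and the proposed repairs (sign case analysis, choosing a different vertex of a short cycle) are not backed by any argument that they succeed. In particular, any sign-toggling repair requires knowing that flipping the sign of an already-assigned label of $G'$ preserves p-properness of $G'$, which is exactly what the paper spends its Cases~5 and~6 establishing: it deletes the \emph{edge set} of a shortest induced cycle (so all cycle vertices become pendant in $G'$), shows through a forcing argument that otherwise every relevant list must have the form $\{\alpha,-\alpha,\beta,-\beta\}$ and that each pendant edge label can be switched to its negative without creating conflicts in $G'$, and only then closes the cases $p$ even, $p=3$ and $p$ odd at least~$5$ by combining the Nullstellensatz with this switching. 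That forcing-and-switching mechanism is the key idea of the proof, and it is absent from your proposal.
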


\begin{proof}
Assume the claim is wrong, and consider $G$ a minimal counterexample to the claim.
Clearly, $G$ is connected.
Let $L$ be a $4$-list assignment to the edges of $G$.
We prove below that $G$ admits a p-proper $L$-labelling whatever $L$ is, a contradiction.
To that aim, we first show that $G$ is cubic:

\begin{itemize}
    \item Assume first that $\delta(G)=1$, and consider $u$ a degree-$1$ vertex of $G$ with unique neighbour $v$. 
    
    \begin{itemize}
        \item Assume first that $d(v)=2$, and let $w$ denote the second neighbour of $v$. Set $G'=G-\{u,v\}$. We can assume that $G'$ is nice, as otherwise $G$ would be the path of length~$3$, in which case even $\chps(G) \leq 3$ holds by Theorem~\ref{theorem:cycles}, a contradiction. Then, by minimality of $G$, there is a p-proper $L'$-labelling $\ell'$ of $G'$, where $L'$ denotes the restriction of $L$ to the edges of $G'$. We extend $\ell'$ to a p-proper $L$-labelling of $G$, getting a contradiction, by correctly assigning labels to $uv$ and $vw$ from their respective lists. We first label $vw$, by assigning a label from $L(vw)$ that is different from~$1$, and so that $w$ does not get in conflict with any of its at most two other neighbours different from $v$. Note that this is possible since $|L(vw)|=4$. We can now extend the labelling to $uv$ by assigning a label from $L(uv)$ so that $v$ does not get in  conflict with $w$. Note that by how $vw$ was labelled, $u$ and $v$ cannot get in conflict. 
        
        \item Assume now that $d(v)=3$, and let $w_1,w_2$ denote the two neighbours of $v$ different from $u$. Set $G'=G -\{u,v\}$. We can assume that $G'$ is nice, as otherwise either 1) one of the $w_i$'s is a degree-$2$ vertex adjacent to a $1$-vertex, or 2) $w_1w_2$ exists and both $w_1$ and $w_2$ have degree~$2$. In the former case, we fall into the previous case (where $d(v)=2$) we have handled. In the latter case, $G$ has only four edges and the claim can be checked by hand. So $G'$ is nice, and, by minimality of $G$, there is a p-proper $L'$-labelling $\ell'$ of $G'$, where $L'$ denotes the restriction of $L$ to the edges of $G'$. To extend it to one of $G$, thus getting a contradiction, we proceed as follows. For every $i \in \{1,2\}$, note that there are at least two values $a_i,b_i \in L(uw_i)$ that can be assigned to $vw_i$ without causing any  conflict between $w_i$ and its at most two neighbours different from $v$. We assign labels to $vw_1$ and $vw_2$ from $\{a_1,b_1\}$ and $\{a_2,b_2\}$, respectively, so that the product of these two labels is different from~$1$. It then suffices to assign to $uv$ a label from~$L(uv)$ so that $v$ gets in  conflict with none of $w_1$ and $w_2$, which is possible since $|L(uv)|=4$. Again, $u$ and $v$ cannot be in  conflict due to how $vw_1$ and $vw_2$ have been labelled.
    \end{itemize}
    
    \item Assume now that $\delta(G)=2$, and consider $u$ a degree-$2$ vertex of $G$ with neighbours $v_1,v_2$. By the minimum degree assumption, each of $v_1$ and $v_2$ has one or two neighbours different from $u$. We here consider $G'=G-u$. We can assume that $G'$ is nice, as, because $\delta(G)=2$, otherwise it would mean that $v_1v_2$ is the only other edge, thus that $G$ is $C_3$, the cycle of length~$3$, in which case $\chps(G) \leq 3$ holds by Theorem~\ref{theorem:cycles}, a contradiction. So $G'$ admits a p-proper $L'$-labelling $\ell'$, where $L'$ is the restriction of $L$ to the edges of $G'$. We show that this p-proper labelling can be extended to $uv_1$ and $uv_2$ by assigning labels from their lists, thereby getting a contradiction. 
    
    Let $x_1,x_2$ be variables associated to $uv_1$ and $uv_2$, respectively. Let us denote by $y_1,y_2$ the values $\pi_{\ell'}(v_1),\pi_{\ell'}(v_2)$, respectively. Let us now consider the  polynomial $$P(x_1,x_2)=(x_1x_2-x_1y_1) \cdot (x_1x_2-x_2y_2) \cdot 
    \prod_{w \in N(v_1) \setminus \{u\}}(x_1y_1-\pi_{\ell'}(w)) \cdot 
    \prod_{w \in N(v_2) \setminus \{u\}}(x_2y_2-\pi_{\ell'}(w)).$$ If $x_1$ and $x_2$ can be assigned values in $L(uv_1)$ and $L(uv_2)$, respectively, so that $P$ does not vanish, then we get a p-proper $L$-labelling of $G$. Since $x_1$ and $x_2$ are the only variables of $P$, it is easy to see that, in the expansion of $P$, the monomial $M$ with largest degree is either $x_1^4x_2^4$ (when $d(v_1)=d(v_2)=3$), $x_1^3x_2^4$ (when $d(v_1)=2$ and $d(v_2)=3$), $x_1^4x_2^3$ (when $d(v_1)=3$ and $d(v_2)=2$) or $x_1^3x_2^3$ (when $d(v_1)=d(v_2)=2$). In all case, since $M$ has nonzero coefficient, then, by the Combinatorial Nullstellensatz, desired values for $x_1$ and $x_2$ can be chosen from lists of size at least~$5$, thus from lists of size at least~$4$ if we are guaranteed that they do not include~$0$ (due to the first two factors of $P$). From this, we deduce that a p-proper $L$-labelling of $G$ can be obtained from $\ell'$, a contradiction.
\end{itemize}

Thus, from now on, $G$ can be assumed to be cubic.
Let $C = u_1\dots u_pu_1$ be a smallest induced cycle of $G$. For every $i \in \{1,\dots,p\}$, we denote by $u'_i$ the neighbour of $u_i$ which does not belong to $C$. 
Let $G' = G - E(C)$.
Note that $G'$ is nice, since the $u_i$'s have degree~$1$ and are not adjacent in $G'$, while all other vertices have degree~$3$.
Thus, by minimality of $G$, there is a p-proper $L'$-labelling $\ell'$ of $G'$, where $L'$ denotes the restriction of $L$ to the edges of $G'$. Our goal is to extend it to the edges of $C$ in a p-proper way to an $L$-labelling of $G$, thereby getting a final contradiction.
 
To ease the exposition of the upcoming arguments, let us introduce some notation. For every $i \in \{1,\dots,p\}$, we set $L_i = L(u_iu_{i+1})$,  $a'_i = \ell'(u_iu'_i)$ and $A'_i = \frac{\pi_{\ell'}(u'_i)}{a'_i}$ (where, here and further, we set $u_{p+1}=u_1$ and $u_0=u_p$).
For some set $X$ of values and $\lambda \in \mathbb{R}^*$, we define $\lambda X = \set{\lambda x : x \in X}$ and $\frac{\lambda}{X} = \set{\frac{\lambda}{x} : x \in X}$. For two sets $X$ and $Y$, we define $X Y = \set{xy : x \in X, y \in Y}$.

The proof goes by distinguishing several cases depending on some lists by $L$ and on the structure of $G$. In each considered case, it is implicitly assumed that none of the previous cases applies.

\begin{enumerate}
    \item \emph{There are $i_0 \in \{1,\dots,p\}$ and $\alpha \in L_{i_0-1}$  such that, for all $\alpha' \in L_{i_0}$, we have $\alpha\alpha' \neq A'_{i_0}$.} 
    
    W.l.o.g., assume that $i_0 =1$. The assumption implies that $u_1$ and $u'_1$ can never be in conflict in an extension of $\ell'$ assigning label $\alpha$ to $u_{p}u_1$. Let us thus start by assigning label $\alpha$ to $u_pu_1$.
    We then consider the other edges $u_{p-1}u_p, u_{p-2}u_{p-1},\dots,u_1u_2$ of $C$ one by one, following this exact ordering. For every edge $u_iu_{i+1}$ considered that way, we assign a label from $L(u_iu_{i+1})$ chosen in the following manner:
    
    \begin{itemize}
        \item If $i \in \{3,\dots,p-1\}$, then we assign to $u_iu_{i+1}$ a label so that $u_{i+1}$ is in conflict with neither ${u_{i+2}}$ nor $u_{i+1}'$. Note that this is possible since $|L(u_iu_{i+1})|=4$. In the case where $i=p-1$, we note that $u_{i+2}=u_1$ is a vertex whose product is not fully determined yet; the conflict between $u_p$ and $u_1$ will actually be taken care of in a later stage of the extension process.
        
        \item If $i=2$, then we assign to $u_2u_3$ a label so that $u_3$ is in conflict with neither $u_4$ not $u_3'$, and the resulting partial product of $u_2$ gets different from the partial product of $u_1$.  This is possible, since $|L(u_2u_3)|=4$. In case $p=3$ and, thus, $u_4=u_1$, the possible conflict between $u_3$ and $u_1$ will be handled during the next step of the process.
        
        \item If $i=1$, the we assign to $u_1u_2$ a label so that $u_2$ gets in conflict with neither $u_3$ not $u_2'$, and $u_1$ and $u_p$ are not in conflict. Again, this is possible because $|L(u_1u_2)|=4$. Recall further that $u_1$ and $u_2$ cannot be in conflict due to the choice of the label assigned to $u_2u_3$. Also, $u_1$ and $u_1'$ cannot be in conflict by the initial assumption on $\alpha$.
    \end{itemize}
    
    Thus, once the whole process has been carried out, we get an $L$-labelling of $G$ which is p-proper, a contradiction.
    \label{subcubic:item:1}
\end{enumerate}

Since Case~1 does not apply, then, throughout what follows, for every $i \in \set{1,\dots,p}$, we have
\begin{equation}
    L_{i-1} = \frac{A'_i}{L_{i}} \text{ and } L_{i} = \frac{A'_i}{L_{i-1}}. \label{eq:subcubic-1}
\end{equation}
\begin{enumerate}[resume]
    \item \emph{There are $i_0 \in \{1,\dots,p\}$ and $\alpha \in L_{i_0}$ such that, for all $\alpha' \in L_{i_0+2}$, we have $\alpha  a'_{i_0+1} \neq \alpha' a'_{i_0+2}$.} 
    
    W.l.o.g., assume that $i_0 =1$. The assumption implies that $u_2$ and $u_3$ can never be in conflict in an extension of $\ell'$ assigning label $\alpha$ to $u_1u_2$. Let us thus assign label $\alpha$ to $u_1u_2$.
    We then consider the other edges of $C$, and label them with labels from their respective lists so that no conflict arises.
    We consider a special value of $p$, before considering the general case.
    
    \begin{itemize}
        \item Assume first that $p=3$, i.e., $C$ is a triangle. We start by assigning a label from $L(u_2u_3)$ to $u_2u_3$ so that $u_2$ does not get in conflict with $u_2'$, and the partial product of $u_3$ gets different from the partial product of $u_1$. Note that this is possible since $|L(u_2u_3)|=4$. We then assign a label from $L(u_1u_3)$ to $u_1u_3$ so that $u_1$ gets in conflict with neither $u_1'$ nor $u_2$, and $u_3$ does not get in conflict with $u_3'$. Again, such a label exists since $|L(u_1u_3)|=4$. Recall that $u_1$ and $u_3$ cannot be in conflict due to how $u_2u_3$ was labelled. Also, $u_2$ and $u_3$ cannot be in conflict by the assumption on $\alpha$.
        
        \item Otherwise, i.e., $p \geq 4$, we start by assigning a label from $L(u_2u_3)$ to $u_2u_3$ so that $u_2$ and $u_2'$ do not get in conflict. We then consider the remaining edges $u_pu_1,u_{p-1}u_p,\dots,u_3u_4$ of $C$ one by one, following this exact ordering. For every edge $u_iu_{i+1}$ considered that way, we assign a label from $L(u_iu_{i+1})$ chosen in the following way:
        
        \begin{itemize}
            \item If $i \in \{5,\dots,p\}$, then we assign to $u_iu_{i+1}$ a label chosen so that $u_{i+1}$ gets in conflict with neither $u_{i+2}$ nor $u_{i+1}'$. This is possible since $|L(u_iu_{i+1})|=4$.
            
            \item If $i=4$, then we assign to $u_4u_5$ a label chosen so that $u_5$ gets in conflict with neither $u_6$ nor $u_5'$, and the partial product of $u_4$ does not get equal to the partial product of $u_3$. This is possible since $|L(u_4u_5)|=4$.
            
            \item If $i = 3$, then we assign to $u_3u_4$ a label so that $u_4$ gets in conflict with neither $u_5$ nor $u_4'$, and $u_3$ does not get in conflict with $u_3'$. Again, this is possible since $|L(u_3u_4)|=4$. Recall that $u_4$ and $u_3$ cannot be in conflict due to how $u_4u_5$ has been labelled. Also, $u_2$ and $u_3$ cannot be in conflict by the assumption on $\alpha$.
        \end{itemize}
    \end{itemize}
    
    Thus, in all cases, we get a p-proper $L$-labelling of $G$, a contradiction.
\end{enumerate}

Since Case~2 does not apply in what follows, then, for every $i \in \set{1,\dots,p}$, we have
\begin{equation}
    L_i = \frac{a'_{i+2}}{a'_{i+1}} L_{i+2}. \label{eq:subcubic-2}
\end{equation}

\begin{enumerate}[resume]
    \item \emph{$G$ is $K_4$, the complete graph on four vertices.} 
    
    Here, $C$ is a cycle $u_1u_2u_3u_1$ of length~$3$, and we have $u' = u'_1 = u'_2 = u'_3$. Also, $\ell'$ assigns labels to the three edges incident to $u'$, since $G'$ is a star. 
    Note that, as long as we label the edges of $C$ last and handle all conflicts at that point, then, prior to  labelling $C$, we might actually change the labels assigned to $u_1u',u_2u',u_3u'$ by $\ell'$ for other labels from their respective lists.
    
    Note now that, for any choice of label $a_3'$ from  $L(u_3u')$ assigned to $u_3u'$, Identity~(\ref{eq:subcubic-2}) must apply, i.e., we must have $L_1 = \frac{a'_{3}}{a'_{2}} L_{3}$, as otherwise previous Case~2 would apply the very same way. This implies that $|L_3| \geq 5$, a contradiction, by the following arguments. Since $|L(u_3u')| = 4$, there are at least two values $x,y \in L(u_3u')$ with distinct absolute values, say $\abs{x}<\abs{y}$. Start by assigning label $x$ to $L(u_3u')$; because Identity~(\ref{eq:subcubic-2}) applies, we deduce that for every $\alpha \in L_1$ we have $x\alpha \in L_3$. The other way around, we have $L_3=L_3'=\{x\alpha : \alpha \in L_1\}$ and $|L_3'|=|L_3|=4$. Now change the label of $u_3u'$ to $y$. Because $\abs{x}<\abs{y}$, we deduce that, for an $\alpha \in L_1$ with largest absolute value, $y\alpha \not \in L_3'$. This implies that $L_3$ must contain a fifth value not in $L_3'$ for Identity~(\ref{eq:subcubic-2}) to apply with $y$.
    
    \item \emph{$p=3$ and $C$ shares an edge with another triangle.} 
    
    Assume $u_1u_2$ belongs to a triangle $u'u_1u_2u'$ different from $C$, where $u' = u'_1 = u'_2$ is the common neighbour of $u_1$ and $u_2$ different from $u_3$.
    Because we are not in Case~3, we have $u_3'\neq u'$, and $u'$ has a neighbour $w \not \in V(C)$.
    Note that, by $\ell'$, there are actually three possible values in $L(u_2'u')$ that can be assigned to $u_2'u'$ without causing $u'$ to be in conflict with $w$, thus two such values $x,y$, with, say, $|x|<|y|$. Start by setting $a_2'=y$. By an application of Identity~(\ref{eq:subcubic-2}) (which applies as otherwise Case 2 would), we deduce that $L_1 = \frac{a'_{3}}{a'_{2}} L_{3}$, which reveals the exact four values in $L_3$. Now, just as in previous Case~3, we note that by changing the value of $a_2'$ to $x$ and applying Identity~(\ref{eq:subcubic-2}) again, we deduce that $L_3$ must contain a fifth value not among the previous four revealed ones. This is a contradiction.
\end{enumerate}

At this point, note that if we modify the label $a'_i$ assigned to any edge $u_iu_i'$ by $\ell'$, then this has no impact on the value $A_{i+1}'$ (and, symmetrically, on $A_{i-1}'$). Indeed, if modifying $a_i'$ also modified $A_{i+1}'$, then this would imply that $u_iu_i'$ is incident to $u_{i+1}'$, thus that $u'_i=u'_{i+1}$. But, in this case, we would deduce that $u_iu_{i+1}u'_iu_i$ is a triangle sharing an edge with $C$, thereby getting a contradiction to the fact that none of Cases~3 and~4 applies.

By manipulating Identities~(\ref{eq:subcubic-1}) and~(\ref{eq:subcubic-2}), note that we can establish the relationship
\begin{equation}
    L_i = \frac{a'_{i+2}A'_{i+2}}{a'_{i+1}A'_{i+1}}  L_{i} = \frac{a'_{i+1}A'_{i+1}}{a'_{i+2}A'_{i+2}}  L_{i}\label{eq:subcubic-3}
\end{equation}
between any list $L_i$ and some of the $a_i'$'s and $A_i'$'s.
For every $i \in \{1,\dots,p\}$,
we define $\lambda_i = \frac{A'_{i+1}}{a'_{i+2}A'_{i+2}}$;
then, $L_i=a_{i+1}'\lambda_iL_i$ by the above.

\begin{enumerate}[resume]
\item \emph{There are $i \in \{1,\dots,p\}$ and a $p$-proper $L$-labelling $\ell$ of $G'$ matching $\ell'$ on all edges but possibly $u_{i+1}u'_{i+1}$, and such that $\abs{\ell(u_{i+1}u'_{i+1}) \lambda_i} \neq 1$.}

The definition of $\ell$ and the fact previous Cases~3 and~4 do not apply, imply that $A'_{i+1}$, $A'_{i+2}$ and $a'_{i+2}$ are the same by both $\ell'$ and $\ell$. From Identity~\ref{eq:subcubic-3}, we deduce that $L_i = \ell(u_{i+1}u'_{i+1}) \lambda_i L_i$, where $\lambda_i$ is the same by both $\ell'$ and $\ell$. Now consider $x_0 \in L_i$; from what we have just deduced, we now get that $$ \set{(\ell(u_{i+1}u'_{i+1})\lambda_i)^j x_0 }_{j \in \NN} \subseteq L_i.$$
Because $\abs{\ell(u_{i+1}u'_{i+1}) \lambda_i} \neq 1$,
we then deduce that the set $\set{(\ell(u_{i+1}u'_{i+1})\lambda_i)^j x_0 }_{j \in \NN}$  has infinite cardinality and is included in $L_i$, which has size~$4$; a contradiction.
\end{enumerate}

Note that, by $\ell'$, there are actually at least two values in $L(u_iu'_i)$ that could be assigned to $u_iu_i'$ without breaking p-properness. This is because $|L(u_iu_i')|=4$, and, when labelling $u_iu_i'$, we only have to make sure that $u_i'$ gets product different from that of its at most two neighbours different from $u_i$ in $G'$ (in particular, note that we must have $A_i' \neq 1$ by $\ell'$ so that $\pi_{\ell'}(u_i) \neq \pi_{\ell'}(u_i')$, and thus we do not have to care about $u_i$ and $u_i'$ getting in conflict when relabelling $u_iu_i'$). 
Because Case~5 does not apply, this actually implies that there are exactly two such values from every $L(u_iu_i')$, and that these two values are precisely $a_i$ and $-a_i$.


\begin{enumerate}[resume]
\item \emph{There exists $i \in \{1,\dots,p\}$ such that $L_i \neq \set{\alpha,-\alpha,\beta,-\beta}$ for some distinct $\alpha,\beta \in \mathbb{R}^*$.}

Let us consider the identity $L_i = a'_{i+1}\lambda_i  L_{i}$ again. Since Case~5 does not apply, we have $\abs{\ell'(u_{i+1}u'_{i+1})\lambda_i} = 1$ for any possible value as $\ell'(u_{i+1}u'_{i+1})$ from $L(u_{i+1}u_{i+1}')$. Since $u'_{i+1}$ has, in $G'$, two neighbours different from $u_{i+1}$, there are, in $L(u_{i+1}u_{i+1}')$, two possible values for $u_{i+1}u'_{i+1}$ that make $u_{i+1}'$ being not in conflict with these two neighbours, and these at least two possibilities must include $a'_{i+1}$ and $-a'_{i+1}$. Now, by considering the p-proper $L'$-labelling of $G'$ obtained from $\ell'$ by changing the label of $u_{i+1}u'_{i+1}$ to $-a_{i+1}$, the same reasoning process leads us to deduce that $L_i = -a'_{i+1}\lambda_i  L_{i}$. This implies that $L_i = - L_i$, a contradiction.
\end{enumerate}

We are now ready to conclude the proof, by considering a few cases on the length of $C$. The crucial points to keep in mind from now on, are that $L$ verifies, for every $i \in \{1, \dots, p\}$, that 1) $a_i',-a_i' \in L(u_iu_i')$ and, in $\ell'$, changing the label of $u_iu_i'$ from $a_i'$ to $-a_i'$ cannot raise a conflict in $G'$, and that 2) there are nonzero real numbers $\alpha_i,\beta_i$ such that $L_i = \{\alpha_i,-\alpha_i,\beta_i,-\beta_i\}$.

\begin{enumerate}[resume]
\item \emph{$p$ is even.} 

For every $i \in \{1,\dots,p\}$, we associate a variable $x_i$ to the edge $u_iu_{i+1}$. We consider the polynomial $$P(x_1, \dots, x_p)= \prod_{i=1}^p \left(x_{i-1}x_i - A_i'\right),$$
which is equivalent to considering $$P'(y_1,\dots,y_p)=\prod_{i=1}^p \left(y_{i-1} + y_i - \log(A_i')\right)$$ where $y_i = \log x_i$ for every $i \in \{1,\dots,p\}$. Note that the monomial $y_1\dots y_p$ has maximum degree and nonzero coefficient in the expansion of $P'$. Thus, by the Combinatorial Nullstellensatz, we can assign values to the $y_i$'s so that $P'$ does not vanish, assuming we have at least two possible values to choose from for each of the $y_i$'s. This implies that we can assign values to the $x_i$'s so that $P$ does not vanish, assuming we have at least two possible values with distinct absolute values to choose from, for each of the $x_i$'s. Particularly, since $|L(u_iu_{i+1})|=4$ for every edge $u_iu_{i+1}$, this implies that $\ell'$ can be extended to the edges of $C$, resulting in an $L$-labelling $\ell$ of $G$ where $\pi_{\ell}(u_i)$ and $\pi_{\ell}(u_i')$ have distinct absolute values for every $i \in \{1,\dots,p\}$. Now, the only possible remaining conflicts are between the $u_i$'s. Due to all the assumptions made this far, recall, for every $i \in \{1,\dots,p\}$, that $\ell$ assigns label $a_i'$ to every edge $u_iu_i'$, that $-a_i' \in L(u_iu_i')$, and that switching $\ell(u_iu_i)$ from $a_i'$ to $-a_i'$ cannot raise a conflict between $u_i'$ and its neighbours. Thus, to get a p-proper $L$-labelling of $G$, we can just consider each of the $u_iu_i'$'s in turn, and for each $u_iu_i'$ of them, switch, if necessary, its label to $-a_i'$ so that $u_i$ gets positive product if $i$ is, say, even, or negative product otherwise.

\item \emph{$p=3$.} 

Because Cases~3 and~4 do not apply, recall that $u_1',u_2',u_3'$ are pairwise different. We extend $\ell'$ as follows. We start by assigning any label from $L(u_1u_2)$ to $u_1u_2$. Next, we assign to $u_3u_1$ a label from $L(u_3u_1)$ so that no conflict between $u_1$ and $u_1'$ arises, and the resulting partial products of $u_2$ and $u_3$ have different absolute values. Note that this is possible, since $L_3$ is of the form $\{\alpha,-\alpha,\beta,-\beta\}$. We finally assign to $u_2u_3$ a label from $L(u_2u_3)$ so that there is no conflict between $u_2$ and $u_2'$, $u_3$ and $u_3'$, and $u_1$ and $u_3$. Recall that $u_2$ and $u_3$ cannot be in conflict due to how $u_3u_1$ was labelled. Thus, the only potential conflict that can remain is between $u_2$ and $u_1$, and, if it occurs, then we can get rid of it by simply changing the label of $u_2u_2'$ from $a_2'$ to $-a_2'$. Recall that this cannot make $u_2'$ get in conflict with its neighbours different from $u_2$, and that $u_2$ and $u_2'$ also cannot get in conflict unless they already were before switching the label of $u_2u_2'$.

\item \emph{$p$ is odd at least~$5$.}

We first use the Combinatorial Nullstellensatz similarly as in Case~7, to label the edges of $C$ in such a way that, for certain pairs of vertices, the resulting products have distinct absolute values. More precisely, we want to achieve this for the pairs $\{u_1,u_1'\}$, $\{u_1,u_2\}$, $\{u_2,u_3\}$, $\{u_3,u_3'\}$, $\{u_4,u_4'\}$, $\{u_5,u_5'\}$, $\dots$, $\{u_{p-2},u_{p-2}'\}$ and $\{u_p,u_p'\}$. We denote by $\mathcal{S}$ the set of those pairs. In order to show that such an extension exists, for every $i \in \{1,\dots,p\}$ we associate a variable $x_i$ to the edge $u_iu_{i+1}$, and consider the polynomial
\begin{equation*}
\begin{split}
    P(x_1, \dots, x_p) = & \left(x_px_1 - A_1' \right) \cdot \left(x_pa_1' - x_2a_2' \right) \cdot \left(x_1a_2' - x_3a_3' \right) 
    \\ & \cdot \left(\prod_{i=3}^{p-2} \left(x_{i-1}x_i - A_i' \right)\right) \cdot \left(x_{p-1}x_p - A_p' \right),
\end{split}
\end{equation*}

which, if $y_i=\log \abs{x_i}$ for every $i \in \{1,\dots,p\}$,
is the same as considering
\begin{equation*}
\begin{split}
P'(y_1, \dots, y_p)= &\left(y_p + y_1 - \log(A_1') \right) \cdot \left(y_p + \log(a_1') - y_2 - \log(a_2') \right) \cdot \left(y_1 + \log(a_2') - y_3 - \log(a_3') \right) \\ & 
\cdot \left(\prod_{i=3}^{p-2} \left(y_{i-1} + y_i - \log(A_i') \right)\right) \cdot \left(y_{p-1} + y_p - \log(A_p') \right).
\end{split}
\end{equation*}
It can be checked that, in the expansion of $P'$, 
the monomial $y_1\dots y_p$ has maximum degree and nonzero coefficient $-2$.
Thus, by the Combinatorial Nullstellensatz we deduce that there is a way to label the edges of $C$ with labels from their respectives lists, so that the desired conflicts (between the adjacent vertices in the pairs of $\mathcal{S}$) are avoided. In particular, this is possible because all these lists are of the form $\{\alpha,-\alpha,\beta,-\beta\}$, and, in particular, contain two values with distinct absolute values.

The resulting labelling might be not p-proper, and, to turn it into a p-proper one, we will \textit{switch} some edges incident to the vertices in $C$, and, by that, we mean changing the current label $l$ of an edge to $-l$. More particularly, we will switch edges of the form $u_iu_{i+1}$ and $u_iu_i'$; due to some of the assumptions made this far, recall that for every such edge $e$ with current label $l$, we do have $-l \in L(e)$.

We start by switching, if necessary, $u_2u_2'$ and $u_{p-1}u_{p-1}'$ so that the products of $u_2'$ and $u_{p-1}'$ get positive and negative, respectively. Next, we switch $u_1u_2$, if necessary, so that the product of $u_2$ gets negative. Now, we consider the edges $u_3u_4, u_4u_5, \dots, u_pu_1$ one by one following this ordering, and, for every such considered edge $u_iu_{i+1}$, we switch it, if necessary, so that the product of $u_i$ gets negative if $i$ is odd, and positive otherwise. Lastly, we switch $u_1u_1'$, if necessary, so that the product of $u_1$ gets negative.

We claim that the eventual labelling of $G$ is p-proper, our final contradiction. First recall, as mentioned earlier, that the switching operation guarantees that the resulting labelling is an $L$-labelling. Its p-properness follows from the following arguments. First, for all the pairs of adjacent vertices in $\mathcal{S}$, the products are different due to distinct absolute values (preserved under the switching operation). Regarding the two adjacent vertices in the pair $\{u_{p-1},u_{p-1}'\}$, the products have different signs and are thus different. Now, for every two adjacent vertices in the pairs $\{u_3,u_4\}, \{u_4,u_5\}, \dots, \{u_p,u_1\}$, the products are different due to their signs being different.
\qedhere
\end{enumerate} 
%
%
%
%

%
%
%
%
\end{proof}

\section{Conclusion}

In this work, we have considered a problem being a combination of the Multiplicative 1-2-3 Conjecture and of the List 1-2-3 Conjecture, standing as a List Multiplicative 1-2-3 Conjecture. In particular, we have exhibited a few bounds on the parameter $\chps$, both for graphs in general and for more specific classes of graphs. While some of these bounds are tight, some others remain a bit distant from what we believe should be optimal.

An interesting point stemming from our proofs, is the methods we have used to establish our bounds. In the context of the List 1-2-3 Conjecture, the algebraic approach, through, in particular, the polynomial method and tools such as the Combinatorial Nullstellensatz, is definitely the best approach we know of at the moment to establish bounds on $\chs$. As described notably in Subsection~\ref{subsection:algebraic-tools}, and seen throughout this work, the potential of this method is a bit less obvious for exhibiting bounds on $\chps$.  Recall that, in the current work, we have mainly exploited the connection between $\chs$ and $\chps$ established in Theorem~\ref{theorem:chs-to-chp}. It might be, however, that there are dedicated ways to better exploit the algebraic approach, and get better bounds on $\chps$.

As a main perspective for further work on the topic, it would be nice to obtain a constant upper bound on $\chps$ for graphs in general. Recall that, due to Theorem~\ref{theorem:chs-to-chp}, this could be obtained through establishing a constant upper bound on $\chs$. This apart, it would be interesting to verify the List Multiplicative 1-2-3 Conjecture for more classes of graphs. For instance, it would be interesting to improve any of the upper bounds in Corollary~\ref{corollary:existing-bounds}, some of which we have already improved in Subsection~\ref{subsection:bounds-particular-classes}. Notably, it is worth mentioning that the arguments used to prove Theorems~\ref{theorem:planar-girth16} and~\ref{theorem:subcubic} are tight, and, as a result, it seems that our proofs would be hard to improve to lower the bound of $4$. From this, we would be interested in having a proof of the List Multiplicative 1-2-3 Conjecture for planar graphs with girth at least~$16$ or for subcubic graphs.

\end{document}